\theoremstyle{change} 
\newtheorem{theorem}{Theorem}[section] 
\newtheorem{lemma}[theorem]{Lemma} 
\newtheorem{proposition}[theorem]{Proposition}
\newtheorem{remark}[theorem]{Remark}
\newtheorem{notation}[theorem]{Notation}
\newenvironment{proof}{\noindent{\bf Proof}\ }{\qed\bigskip}
\renewcommand{\le}{\leqslant}
\renewcommand{\marginpar}[1]{}
\newcommand{\alphabar}{\overline{\alpha}}
\newcommand{\Aut}{\mathrm{Aut}}
\newcommand{\BIGOP}[1]
  {\mathop{\mathchoice
  {\raise-0.22em\hbox{\huge $#1$}}
  {\raise-0.05em\hbox{\Large $#1$}}{\hbox{\large $#1$}}{#1}}}
\newcommand{\cdotG}{\cdot_G}
\newcommand{\cdotH}{\cdot_H}
\newcommand{\Gammatilde}{\widetilde{\Gamma}}
\newcommand{\id}{\mathrm{id}}
\newcommand{\im}{\mathrm{im}}
\newcommand{\Inn}{\mathrm{Inn}}
\newcommand{\myiso}{\buildrel\sim\over\to}
\newcommand{\lexp}[2]{\setbox0=\hbox{$#2$} \setbox1=\vbox to
                 \ht0{}\,\box1^{#1}\!#2}
\newcommand{\Out}{\mathrm{Out}}
\newcommand{\phibar}{\overline{\phi}}
\newcommand{\qed}{\nobreak\hfill
                   \vbox{\hrule\hbox{\vrule\hbox to 5pt
                   {\vbox to 8pt{\vfil}\hfil}\vrule}\hrule}}
\newcommand{\scrS}{\mathscr{S}}
\newcommand{\scrStilde}{\tilde{\scrS}}
\newcommand{\ZZ}{\mathbb{Z}}
\title{Orthogonal units of the bifree double Burnside ring\footnote{{\bf MR Subject Classification:}  19A22, 20C20.
{\bf Keywords:}  Burnside ring, double Burnside ring, unit group, blocks of finite groups.}}
\author{\small Robert Boltje\\
  \small Department of Mathematics\\
  \small University of California\\
  \small Santa Cruz, CA 95064\\
  \small U.S.A.\\
  \small boltje@ucsc.edu
  \and
  \small Philipp Perepelitsky\\
  \small Department of Mathematics\\ 
  \small University of California\\
  \small Santa Cruz, CA 95064\\
  \small U.S.A.\\
  \small pperepel@ucsc.edu}
\date{September 23, 2013\\ {\small (revised February 26, 2014)}}
\begin{document}
\sloppy


\maketitle


\begin{abstract}
The bifree double Burnside ring $B^\Delta(G,G)$ of a finite group $G$ has a natural anti-involution. We study the group $B^\Delta_\circ(G,G)$ of orthogonal units in $B^\Delta(G,G)$. It is shown that this group is always finite and contains a subgroup isomorphic to $B(G)^\times\rtimes \Out(G)$, where $B(G)^\times$ denotes the unit group of the Burnside ring of $G$ and $\Out(G)$ denotes the outer automorphism group of $G$. Moreover it is shown that if $G$ is nilpotent then $B^\Delta_\circ(G,G)\cong B(G)^\times\rtimes \Out(G)$. The results can be interpreted as positive answers to questions on equivalences of $p$-blocks of group algebras in the case that the block is the group algebra of a $p$-group.
\end{abstract}


\section{Introduction}\label{sec intro}
Let $G$ and $H$ be finite groups and let $F$ be an algebraically closed field of positive characteristic $p$. Moreover, let $A$ and $B$ be blocks of the group algebras $FG$ and $FH$, respectively. Our long term goal is to study necessary and sufficient conditions for the existence of an element $\gamma\in T^\Delta(A,B)$ with the property 
\begin{equation}\label{eqn inverse conditions 1}
  \gamma\cdotH\gamma^\circ = [A] \text{ in } T^\Delta(A,A) \quad\text{and}\quad 
  \gamma^\circ\cdotG\gamma = [B]\text{ in }T^  \Delta(B,B)\,. 
\end{equation}  
Here, $T^\Delta(A,B)$ denotes the Grothendieck group, with respect to direct sums, of the category of $p$-permutation $(A,B)$-bimodules (i.e., direct summands of finitely generated permutation modules when regarded as $F[G\times H]$-modules) whose indecomposable direct summands have a {\em twisted diagonal subgroup} of $G\times H$ as vertex, i.e., a subgroup of the form
\begin{equation}\label{eqn twisted diagonal}
  \Delta(R,\alpha,S)=\{(\alpha(s),s)\mid s\in S\}\,,
\end{equation}
for an isomorphism $\alpha\colon S\myiso R$ between subgroups $S\le H$ and $R\le G$. Moreover, the operation $-^\circ$ is induced by taking dual modules and $-\cdotH-$ is induced by taking the tensor product over $FH$.

\smallskip
We are interested in the following more specific questions: What invariants (for instance defect groups, fusion systems) of the blocks $A$ and $B$ coincide, given the existence of some $\gamma$ satisfying (\ref{eqn inverse conditions 1})? Does each $\gamma$ satisfying (\ref{eqn inverse conditions 1}) actually {\em determine} an isomorphism between defect groups and fusion systems? What is the group of auto-equivalences $\gamma\in T^\Delta(A,A)$ satisfying (\ref{eqn inverse conditions 1}) with $A=B$? Is it finite? These questions will be addressed in this generality in a forthcoming paper. The results in this paper form the first step on this path:

\smallskip
We consider the special case where $G$ and $H$ are $p$-groups. This forces $A=FG$ and $B=FH$. Moreover, the group $T^\Delta(A,B)$ is canonically isomorphic to $B^\Delta(G,H)$ the Grothendieck group of finite $(G,H)$-bisets, whose point stabilizers are twisted diagonal subgroups of $G\times H$, when regarded as left $G\times H$-sets.  The tensor product of modules becomes the tensor product of bisets and taking dual bimodules induces the map $B^\Delta(G,H)\to B^\Delta(H,G)$, $[G\times H/L]\mapsto [H\times G/L^\circ]$, where $L^\circ:=\{(y,x)\in H\times G\mid (x,y)\in L\}$. This motivates the study of elements $\gamma\in B^\Delta(G,H)$ with the property that 
\begin{equation}\label{eqn inverse conditions 2}
  \gamma\cdotH\gamma^\circ=[G] \text{ in } B^\Delta(G,G) \quad \text{and}\quad 
  \gamma^\circ\cdotG\gamma=[H] \text{ in } B^\Delta(H,H)
\end{equation}  
with the above questions in mind. We will see that they have positive answers.
 
 \smallskip
However, studying such elements makes sense for arbitrary finite groups $G$ and $H$ and we place ourselves into this more general situation. We define $B_\circ^\Delta(G,H)$ to be the set of all elements $\gamma\in B^\Delta(G,H)$ satisfying (\ref{eqn inverse conditions 2}). If $G=H$ then we call such an element an {\em orthogonal unit} of $B^\Delta(G,G)$. Note that $B_\circ^\Delta(G,G)$ is a group under $-\cdotG-$. We call an element $\gamma$ of $B^\Delta(G,H)$ {\em uniform} if there exists an isomorphism $\phi\colon H\myiso G$ such that $\gamma$ lies in the $\ZZ$-span of the standard basis elements $[(G\times H)/\Delta(\phi(V),\phi,V)]$, $V\le H$. We denote the outer automorphism group of a group $G$ by $\Out(G)$. It acts through ring automorphisms on the Burnside ring $B(G)$ by $\lexp{\phibar}{[G/R]}:=[G/\phi(R)]$, where $\phibar\in\Out(G)$ denotes the image of $\phi\in\Aut(G)$ and $R\le G$. Thus, $\Out(G)$ also acts on the unit group $B(G)^\times$ of $B(G)$ via group automorphisms.

\smallskip
Our main result is the following theorem.

\bigskip
\begin{theorem}\label{thm main}
Let $G$ and $H$ be finite groups.

\smallskip
{\rm (a)} Assume that $\gamma\in B^\Delta(G,H)$ satisfies $\gamma\cdotH\gamma^\circ = [G]$ in $B^{\Delta}(G,G)$. Then also $\gamma^\circ\cdotG\gamma = [H]$ in $B^\Delta(H,H)$. In particular $\gamma\in B^\Delta_\circ(G,H)$.

\smallskip
{\rm (b)} The set $B_\circ^\Delta(G,H)$ is finite.

\smallskip
{\rm (c)} The set $B_\circ^\Delta(G,H)$ is non-empty if and only if $G$ is isomorphic to $H$.

\smallskip
{\rm (d)} For each $\gamma\in B_\circ^\Delta(G,H)$ there exists an isomorphism $\phi\colon H\myiso G$, unique up to composition with inner automorphisms of $G$, such that $[(G\times H)/\Delta(G,\phi,H)]$ occurs in $\gamma$. Moreover, if $G$ is nilpotent then each $\gamma\in B_\circ^\Delta(G,H)$ is uniform.

\smallskip
{\rm (e)} If $G$ is nilpotent then the group $B_\circ^\Delta(G,G)$ is isomorphic to the semidirect product $B(G)^\times\rtimes\Out(G)$ with respect to the natural action of $\Out(G)$ on $B(G)^\times$.
\end{theorem}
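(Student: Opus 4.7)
The plan is to exhibit an explicit isomorphism
$$\psi: B(G)^\times \rtimes \Out(G) \longrightarrow B_\circ^\Delta(G,G).$$
First I would set up two natural families of elements in $B^\Delta(G,G)$. For $\phi \in \Aut(G)$ let $\tau_\phi := [(G \times G)/\Delta(G, \phi, G)]$; conjugation of $\Delta(G,\phi,G)$ by $(g,1) \in G \times G$ shows $\tau_\phi$ depends only on $\phibar \in \Out(G)$, and a direct computation via the Mackey-type biset composition formula (together with $\Delta(G, \phi, G)^\circ = \Delta(G, \phi^{-1}, G)$) yields $\tau_\phi \cdotG \tau_\psi = \tau_{\phi\psi}$ and $\tau_\phi^\circ = \tau_{\phi^{-1}}$, so $\phibar \mapsto \tau_\phi$ is a homomorphism $\Out(G) \to B_\circ^\Delta(G,G)$. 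Next, consider the diagonal ring homomorphism
$$\iota: B(G) \longrightarrow B^\Delta(G, G), \quad [G/V] \mapsto [(G \times G)/\Delta(V, \id, V)],$$
which is injective because distinct $G$-conjugacy classes of subgroups $V \le G$ give distinct $G \times G$-conjugacy classes of $\Delta(V, \id, V)$, and which commutes with $-^\circ$ since $\Delta(V, \id, V)^\circ = \Delta(V, \id, V)$. Using the classical fact that every element of $B(G)^\times$ squares to $1$, one obtains $\iota(u) \cdotG \iota(u)^\circ = \iota(u^2) = \iota(1) = [G]$ for each $u \in B(G)^\times$, so $\iota(B(G)^\times) \subseteq B_\circ^\Delta(G,G)$. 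Then set $\psi(u, \phibar) := \iota(u) \cdotG \tau_\phi$.

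That $\psi$ is a group homomorphism hinges on the conjugation identity
$$\tau_\phi \cdotG \iota(v) \cdotG \tau_{\phi^{-1}} = \iota(\lexp{\phibar}{v}) \qquad (v \in B(G)),$$
which reduces via the biset product formula to the subgroup calculation
$$\Delta(G, \phi, G) * \Delta(V, \id, V) * \Delta(G, \phi^{-1}, G) = \Delta(\phi(V), \id, \phi(V)).$$
Granted this, $\psi(u_1, \phibar_1) \cdotG \psi(u_2, \phibar_2) = \iota(u_1 \cdot \lexp{\phibar_1}{u_2}) \cdotG \tau_{\phi_1\phi_2} = \psi((u_1,\phibar_1)(u_2,\phibar_2))$.

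For injectivity, the same type of Mackey computation gives $\iota([G/V]) \cdotG \tau_\phi = [(G \times G)/\Delta(V, \phi, \phi^{-1}(V))]$, so $\psi(u,\phibar) = [G] = [(G\times G)/\Delta(G,\id,G)]$ forces (by comparing standard basis expansions) $V = G$ and $\phi \in \Inn(G)$, i.e.\ $\phibar = 1$ and $u = 1$. Surjectivity is where part (d) enters decisively: nilpotence of $G$ implies every $\gamma \in B_\circ^\Delta(G,G)$ is uniform, so $\gamma = \sum_V a_V [(G\times G)/\Delta(\phi(V), \phi, V)]$ for some $\phi \in \Aut(G)$ (unique mod $\Inn(G)$). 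The product formula gives $[(G\times G)/\Delta(\phi(V),\phi,V)] = \iota([G/\phi(V)]) \cdotG \tau_\phi$, so $\gamma = \iota(u)\cdotG \tau_\phi$ with $u := \sum_V a_V [G/\phi(V)] \in B(G)$. Since $B_\circ^\Delta(G,G)$ is closed under multiplication (by part (a)), $\iota(u) = \gamma \cdotG \tau_{\phi^{-1}} \in B_\circ^\Delta(G,G)$; hence $\iota(u^2) = \iota(u)\cdotG \iota(u)^\circ = [G] = \iota(1)$, and injectivity of $\iota$ forces $u^2 = 1$. In particular $u \in B(G)^\times$ and $\gamma = \psi(u, \phibar)$.

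The main obstacle is the biset-product bookkeeping concentrated in the conjugation identity and in the expansion $\iota([G/\phi(V)]) \cdotG \tau_\phi = [(G\times G)/\Delta(\phi(V), \phi, V)]$; both reduce to the explicit $*$-composition of subgroups of $G \times G$ arising from the Mackey-type formula for composing bisets, and once these subgroup identities are verified the group-theoretic assembly is routine. The other essential input is part (d): without its uniformity conclusion (valid precisely under nilpotence of $G$) there would be no a priori reason for an orthogonal unit to factor as $\iota(u) \cdotG \tau_\phi$, and surjectivity would fail.
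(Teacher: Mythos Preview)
Your argument for part~(e) is correct and uses the same ingredients as the paper: the injective ring map $\iota$, the homomorphism $\phibar\mapsto\tau_\phi$ (called $\eta$ in the paper), the conjugation identity $\tau_\phi\cdotG\iota(v)\cdotG\tau_{\phi^{-1}}=\iota(\lexp{\phibar}{v})$ (Lemma~3.2(e) there), and the uniformity of orthogonal units under nilpotence. The organization differs: the paper first shows for \emph{arbitrary} $G$ that $B_\circ^\Delta(G,G)=\Lambda_G\rtimes\Delta_G$ with $\Delta_G\cong\Out(G)$ and $\Lambda_G=\ker(\pi)$, and then for nilpotent $G$ proves $\Lambda_G=\iota(B(G)^\times)$ via the equivalence of Lemma~3.4; you instead build the isomorphism $B(G)^\times\rtimes\Out(G)\to B_\circ^\Delta(G,G)$ directly and use uniformity only for surjectivity. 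Your route is more concrete; the paper's isolates the general decomposition $\Lambda_G\rtimes\Out(G)$ valid without nilpotence, which it then exploits in Section~4.

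One logical point to watch: you invoke part~(d) (uniformity) to prove~(e), but in the paper the uniformity conclusion of~(d) is established \emph{after}~(e), as a consequence of the same analysis (the self-normalizing-subgroup obstruction of Lemma~3.5). There is no real circularity, since that obstruction argument is independent of~(e); but if you are writing up the whole theorem you should either prove uniformity for $B_\circ^\Delta(G,G)$ directly before citing it, or reorder so that the nilpotent clause of~(d) is proved first.
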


We also show that, for any finite group $G,$ the group $B_\circ^\Delta(G,G)$ has a normal subgroup isomorphic to $B(G)^\times,$ (see Lemma 3.2(a)), but we have no control over the factor group.

\medskip
In \cite{Bc2} Bouc determined the group $B(G)^\times$ for a finite $p$-group $G$ in terms of explicit basis elements. Moreover if $G$ is a group of odd order then $B(G)^\times=\{\pm 1\}$ (see \cite[Proposition~6.7(iii)]{Y}). Thus, if $G$ is a finite nilpotent group of odd order then $B_\circ^\Delta(G,G)\cong \{\pm1\}\times\Out(G)$.

\medskip
The paper is arranged as follows: In Section~\ref{sec prel} we introduce some notation and recall basic facts about $G$-sets, $(G,H)$-bisets, as well as the Burnside ring $B(G)$ and double Burnside group $B(G,H)$. In Section~\ref{sec proof of Thm} we prove Theorem~\ref{thm main}. Finally, in Section~\ref{sec examples} we show that if $G$ is a Frobenius group with mild extra conditions then the group $B^\Delta_\circ(G,G)$ contains $B(G)^\times\rtimes \Out(G)$ as a proper subgroup and there exist elements in $B^\Delta_\circ(G,G)$ which are not uniform. We also compute $B^\Delta_\circ(G,G)$ explicitly for the alternating group $G=A_4$.

\bigskip
We are grateful to the referee for making a previous formulation of Proposition~\ref{prop frobenius} more conceptual and for suggesting the question in Remark~\ref{rem frobenius}(b).


\section{Preliminaries and cited results}\label{sec prel}

In this section we establish some notation and cite results about $G$-sets, $(G,H)$-bisets and their corresponding Grothendieck groups $B(G)$ and $B(G,H)$. The results, ideas and constructions (except Theorem 2.5 and possibly Lemma 2.4) provided in this section date back to earlier work of Adams, Gunawardena and Miller \cite{AGM}, Benson and Feshbach \cite{BF}, Bouc \cite{Bc3}, Martino and Priddy \cite{MP}, and Webb \cite {W}. We thank the referee of an earlier version of this paper for bringing some of these to our attention. For some of the statements in this section we provide quick proofs for the reader's convenience. For all statements without proof we refer the reader to \cite[Chapter~2]{Bc1}.

\begin{notation}\label{not beginning}
Let $G$, $H$, and $K$ be finite groups. 

(a) We indicate by $U\le G$ that $U$ is a subgroup of $G$. We write $U<G$ if $U\le G$ and $U\neq G$. For an element $x\in G$ we write $c_x\colon G\to G$, $g\mapsto xgx^{-1}$, for the inner automorphism induced by $x$. We also set $\lexp{x}{U}:=xUx^{-1}$ for $x\in G$ and $U\le G$. The set of subgroups of $G$ is denoted by $\scrS_G$. If $\scrS\subset \scrS_G$ is a subset that is closed under $G$-conjugation and under taking subgroups then $\scrStilde\subseteq\scrS$ will always denote a set of representatives of the conjugacy classes of subgroups in $\scrS$. We denote by $\scrS^\Delta_{G,H}\subseteq \scrS_{G\times H}$ the set of subgroups of $G\times H$ that are of the form
\begin{equation*}
   \Delta(R,\alpha,S):=\{(\alpha(h),h)\mid h\in S\}\,,
\end{equation*}
where $\alpha\colon S\myiso R$ is an isomorphism between a subgroup $S$ of $H$ and a subgroup $R$ of $G$. These subgroups are precisely the elements $L\in\scrS_{G\times H}$ that satisfy $(G\times\{1\})\cap L = \{(1,1)\}=(\{1\}\times H)\cap L$, and will be called {\em twisted diagonal} subgroups. The set $\scrS_{G,H}^\Delta\subseteq \scrS_{G\times H}$ is closed under $(G\times H)$-conjugation and under taking subgroups. 

\smallskip
(b) The Burnside ring of $G$, denoted by $B(G)$, is the Grothendieck ring of the category of finite left $G$-sets with respect to disjoint unions and direct products. Each finite left $G$-set $X$ gives rise to an element $[X]\in B(G)$. The elements $[G/R]$, $R\in \scrStilde_G$, form a $\ZZ$-basis of $B(G)$, the {\em standard basis}. For any subgroup $U$ of $G$, one has a ring homomorphism $\Phi_U\colon B(G)\to \ZZ$, determined by $[X]\mapsto |X^U|$, where $|X^U|$ denotes the number of $U$-fixed points of the $G$-set $X$. The collection of these maps yields an injective ring homomorphism $\Phi\colon B(G)\to \prod_{U\le G} \ZZ$, $[X]\mapsto (|X^U|)_{U\le G}$. Note that $\Phi_U=\Phi_{xUx^{-1}}$ for all $U\le G$ and $x\in G$. Moreover, for $S\le G$ one has
\begin{equation}\label{eqn mark}
  \Phi_U([G/S])=|\{gS\in G/S \mid U\le \lexp{g}{S}\}|\,.
\end{equation}

\smallskip
(c) The double Burnside group $B(G,H)$ is defined as the Grothendieck group of finite $(G,H)$-bisets $X$, i.e., finite sets equipped with a left action of $G$ and a commuting right action of $H$. A $(G,H)$-biset is called {\em bifree} if it is free when considered as left $G$-set and as right $H$-set. The Grothendieck group $B^\Delta(G,H)$ of bifree $(G,H)$-bisets can be considered as a subgroup of $B(G,H)$. We identify $(G,H)$-bisets with left $(G\times H)$-sets via the definitions $(g,h)\cdot x:=g\cdot x\cdot h^{-1}$ and $g\cdot x\cdot h:=(g,h^{-1})\cdot x$, for $(g,h)\in G\times H$ and $x\in X$. This way we will identify $B(G,H)$ with $B(G\times H)$ and $B^\Delta(G,H)$ with the $\ZZ$-span of the standard basis elements  $[(G\times H)/L]$, $L\in\scrStilde_{G,H}^\Delta$, of $B(G\times H)$.

\smallskip
(d) If $X$ is a $(G,H)$-biset then the set $X$ can be regarded as $(H,G)$-biset via $hxg:=g^{-1}xh^{-1}$, we denote this $(H,G)$-biset by $X^\circ$. This operation induces an isomorphism $-^\circ\colon B(G,H)\to B(H,G)$, $[(G\times H)/L]\to [(H\times G)/L^\circ]$, where $L^\circ:=\{(h,g)\mid (g,h)\in L\}$ for $L\le G\times H$. One has $B^\Delta(G,H)^\circ=B^\Delta(H,G)$ and $\Delta(R,\alpha,S)^\circ=\Delta(S,\alpha^{-1},R)$ for $\Delta(R,\alpha,S)\in\scrS_{G,H}^\Delta$.

\smallskip
(e) If $X$ is a $(G,H)$-biset and  $Y$ is an $(H,K)$-biset then the {\em tensor product} $X\times_H Y$ is defined as the set of $H$-orbits of $X\times Y$ under the $H$-action $h\cdot(x,y):=(xh^{-1},hy)$ for $h\in H$ and $(x,y)\in X\times Y$. The $H$-orbit of $(x,y)$ is denoted by $x\times_H y$ and the set $X\times_HY$ is a $(G,K)$-biset under $g(x\times_H y)k:=(gx)\times_H(yk)$. This operation defines a bilinear map 
\begin{equation*}
  -\cdotH-\colon B(G,H)\times B(H,K) \to B(G,K)\,,\quad ([X],[Y])\mapsto [X\times_H Y]\,,
\end{equation*}
which maps $B^\Delta(G,H)\times B^\Delta(H,K)$ to $B^\Delta(G,K)$ and satisfies $(a\cdotH b)^\circ = b^\circ\cdotH a^\circ$ for $a\in B(G,H)$ and $b\in B(H,K)$. If $G=H=K$ then the tensor product induces a ring structure on $B(G,G)$, the {\em double Burnside ring} of $G$ with identity element $[G]=[(G\times G)/\Delta(G)]$, where $G$ is viewed as $(G,G)$-biset via left and right multiplication, and $\Delta(R):=\Delta(R,\id_R,R)$, for $R\le G$. Subgroups of $G\times G$ of the form $\Delta(R)$ will be called {\em diagonal} subgroups. For each $(G,H)$-biset $X$ one has $[G]\cdotG[X] = [X] = [X]\cdotH [H]$ in $B(G,H)$.

\smallskip
{\rm (f)} For subgroups $L\le G\times H$ and $M\le H\times K$ one defines their composition (as relations) by
\begin{equation*}
  L*M:=\{(g,k)\in G\times K\mid \exists h\in H \text{ with } (g,h)\in L\text{ and }(h,k)\in M\}\,.
\end{equation*}
Note that $L*M\le G\times H$. Moreover, if $L=\Delta(R,\alpha,S)$ and $M=\Delta(S,\beta,T)$ then $L*M=\Delta(R,\alpha\beta,T)$.
\end{notation}

We say that a standard basis element $[G/R]$ of $B(G)$ occurs in an element $a\in B(G)$ if its coefficient is non-zero.

\begin{lemma}\label{lem mark properties}
Let $G$ and $H$ be finite groups, let $L\le G\times H$, and let $a\in B(G,H)$.

\smallskip
{\rm (a)} One has $\Phi_L(a) = \Phi_{L^\circ}(a^\circ)$.

\smallskip
{\rm (b)} If $a\in B^\Delta(G,H)$ and $L=\Delta(R,\alpha,S)\in\scrS_{G,H}^\Delta$ then the integer $\Phi_L(a)$ is divisible by $|C_G(R)|$ and by $|C_H(S)|$. 

\smallskip
{\rm (c)} The map $\Phi\colon B^\Delta(G,H)\to\prod_{L\in\scrS^\Delta_{G,H}} \ZZ$, $a\mapsto (\Phi_L(a))$, is injective. Its image is a subgroup of finite index in $(\prod_{L\in\scrS^\Delta_{G,H}}\ZZ)^{G\times H}$, the $(G\times H)$-fixed points under the natural action induced by the conjugation action of $G\times H$ on the indexing set $\scrS^\Delta_{G\times H}$, i.e., tuples that are constant on conjugacy classes.

\smallskip
{\rm (d)} Let $a\in B^\Delta(G,H)$ and $L\in\scrS^\Delta_{G,H}$. Then $L$ is maximal in $\scrS^\Delta_{G,H}$ with the property that $[(G\times H)/L]$ occurs in $a$ if and only if $L$ is maximal in $\scrS^\Delta_{G,H}$ with the property that $\Phi_L(a)\neq 0$.
\end{lemma}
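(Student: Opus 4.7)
The plan is to prove (a)--(d) in order, with both (c) and (d) resting on a single triangularity property of the mark matrix restricted to twisted diagonal subgroups.

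For (a), I unwind definitions: an element $x\in X$ satisfies $(g,h)\cdot x=gxh^{-1}=x$ for all $(g,h)\in L$ if and only if the same element in $X^\circ$ is fixed by $L^\circ$, since the $(h,g)$-action on $X^\circ$ also produces $gxh^{-1}$. Hence $|X^L|=|(X^\circ)^{L^\circ}|$, and $\ZZ$-linearity extends this to (a).

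For (b), a direct computation gives $C_{G\times H}(\Delta(R,\alpha,S))=C_G(R)\times C_H(S)$, so $C_G(R)\times\{1\}$ preserves the $L$-fixed set of any $(G\times H)$-set. When $X=(G\times H)/M$ with $M\in\scrS^\Delta_{G,H}$, every point stabilizer is a conjugate of $M$, hence again twisted diagonal, so it intersects $G\times\{1\}$ trivially. Thus $C_G(R)\times\{1\}$ acts freely on $X^L$, forcing $|C_G(R)|$ to divide $\Phi_L([(G\times H)/M])$; by linearity this divisibility extends to all of $B^\Delta(G,H)$, and a symmetric argument with $\{1\}\times C_H(S)$ handles $|C_H(S)|$.

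For (c), conjugation invariance $\Phi_L=\Phi_{\lexp{(g,h)}{L}}$ places the image of $\Phi$ inside the fixed-point subring. The key input, visible from formula (\ref{eqn mark}), is that $\Phi_L([(G\times H)/L'])\neq 0$ forces $L$ to be $(G\times H)$-subconjugate to $L'$. Ordering $\scrStilde^\Delta_{G,H}$ by decreasing subgroup order therefore makes the matrix $\bigl(\Phi_L([(G\times H)/L'])\bigr)_{L,L'\in\scrStilde^\Delta_{G,H}}$ upper triangular with nonzero diagonal entries $|N_{G\times H}(L)/L|$. Its nonvanishing determinant then gives both injectivity of $\Phi$ on $B^\Delta(G,H)$ and finite index of its image in $(\prod_{L\in\scrS^\Delta_{G,H}}\ZZ)^{G\times H}$, which are free abelian groups of the same rank $|\scrStilde^\Delta_{G,H}|$.

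Part (d) is deduced from the same triangularity. Write $a=\sum_{L''\in\scrStilde^\Delta_{G,H}} n_{L''}[(G\times H)/L'']$. If $L$ is maximal with $n_L\neq 0$, then only $L''=L$ contributes to $\Phi_L(a)$, giving $\Phi_L(a)=n_L|N_{G\times H}(L)/L|\neq 0$, and the same reasoning forces $\Phi_{L'}(a)=0$ for any strictly larger $L'$. Conversely, given $L$ maximal with $\Phi_L(a)\neq 0$, suppose some $L''\supsetneq L$ satisfies $n_{L''}\neq 0$ and pick such an $L'$ of maximal order; one checks $L'$ is then maximal in $\{L''\colon n_{L''}\neq 0\}$ (any strictly larger $L'''$ would strictly contain $L$ as well, contradicting the choice of $L'$), so the forward direction forces $\Phi_{L'}(a)\neq 0$, contradicting maximality of $L$. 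Hence no such $L''$ exists and $\Phi_L(a)=n_L|N_{G\times H}(L)/L|$ forces $n_L\neq 0$. The main technical point throughout is keeping the ``strictly larger'' relation $(G\times H)$-conjugacy-invariant; once that bookkeeping is in place, (c) and (d) are formal consequences of the triangular mark-matrix structure.
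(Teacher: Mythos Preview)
Your proof is correct and takes essentially the same route as the paper's: the same fixed-point identification for (a), the same free action of $C_G(R)\times C_H(S)$ on $X^L$ for (b), and the same triangularity consequence of Equation~(\ref{eqn mark}) for (c) and (d). The only cosmetic difference is in (c), where the paper invokes the already-stated injectivity of the full mark homomorphism on $B(G\times H)$ and then restricts, while you re-derive injectivity directly on $B^\Delta(G,H)$ from the triangular mark matrix; your version is simply more explicit.
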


\begin{proof}
(a) If  $X$ is a $(G,H)$-biset then the subsets $(X^\circ)^{L^\circ}$ and $X^L$ of $X$ coincide.

\smallskip
(b) Let $X$ be a bifree $(G,H)$-biset and view it as left $(G\times H)$-set. Then $X^L$ is naturally an $N_{G\times H}(L)$-set. Since $C_G(R)\times C_H(S)\le N_{G\times H}(L)$, $X^L$ can be regarded as $(C_G(R),C_H(S))$-biset. Since $X$ is bifree as $(G,H)$-biset, $X^L$ is bifree as $(C_G(R),C_H(S))$-biset. The result follows.

\smallskip
(c) This follows from Equation~(\ref{eqn mark}), the injectivity of $\Phi$, from $\Phi_{\lexp{(g,h)}{L}}=\Phi_L$ for $L\in\scrS_{G,H}^\Delta$ and $(g,h)\in G\times H$, and from counting ranks.

\smallskip
(d) By Equation~(\ref{eqn mark}), for $L,L'\in\scrS^\Delta_{G,H}$, one has $\Phi_L([(G\times H)/L'])\neq 0$ if and only if $L$ is conjugate to a subgroup of $L'$. The result follows.
\end{proof}

For finite groups $G$ and $H$ we denote the projections $G\times H\to G$ and $G\times H\to H$ by $p_1$ and $p_2$, respectively. The following result describes the decomposition of the tensor product of two transitive bisets, see~\cite[Lemma~2.3.24]{Bc1}.

\begin{lemma}\label{lem Mackey formula}
Let $G$, $H$, and $K$ be finite groups and let $L\le G\times H$ and $M\le H\times K$. Then
\begin{equation*}
  [(G\times H)/L]\cdotH[(H\times K)/M] = \sum_{h\in p_2(L)\backslash H/p_1(M)} 
  [(G\times K)/ (L*\lexp{(h,1)}{M})]
\end{equation*}
in $B(G,K)$, where $h$ runs through a set of representatives of the double cosets $p_2(L)\backslash H/p_1(M)$.
\end{lemma}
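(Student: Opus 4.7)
The plan is to compute the tensor product $X\timesH Y$, where $X=(G\times H)/L$ and $Y=(H\times K)/M$, by giving an explicit parameterization and decomposing the result into $(G\times K)$-orbits. Concretely, I will construct a surjection $\psi\colon G\times H\times K\to X\timesH Y$, identify its fibers as the orbits of a suitable right action of $L\times M$, and then read off the resulting orbit/stabilizer structure under the induced left action of $G\times K$.

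Using the defining relation $(x\cdot h)\timesH y = x\timesH (h\cdot y)$ of the tensor product, together with the fact that every right $H$-orbit on $X$ contains a representative of the form $(g,1)L$, every element of $X\timesH Y$ can be written as $(g,1)L\timesH(h,k)M$ for some $(g,h,k)\in G\times H\times K$. This gives the surjection $\psi(g,h,k):=(g,1)L\timesH(h,k)M$. Unfolding the tensor product relation and the left coset identifications on each factor, one checks that $\psi(g,h,k)=\psi(g',h',k')$ if and only if there exist $(\ell_1,\ell_2)\in L$ and $(m_1,m_2)\in M$ with $g'=g\ell_1$, $h'=\ell_2^{-1}h m_1$, and $k'=km_2$. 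Thus $\psi$ factors through a bijection $(G\times H\times K)/(L\times M)\myiso X\timesH Y$ for the indicated right action of $L\times M$. Transferring the $(G\times K)$-biset structure to this parameterization yields $(g',k')\cdot(g,h,k)=(g'g,h,k'k)$, i.e., left multiplication on the outer coordinates only.

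From this parameterization the decomposition is immediate: since the $(G\times K)$-action is transitive on the first and third coordinates and the $L\times M$-action moves the middle coordinate via $h\mapsto\ell_2^{-1}h m_1$ with $\ell_2\in p_2(L)$ and $m_1\in p_1(M)$, the $(G\times K)$-orbits are indexed by the double cosets in $p_2(L)\backslash H/p_1(M)$. For each representative $h$, the stabilizer in $G\times K$ of the class of $(1,h,1)$ consists of those $(g,k)$ for which there exist $(\ell_1,\ell_2)\in L$ and $(m_1,m_2)\in M$ with $g=\ell_1$, $\ell_2^{-1}h m_1=h$, and $k=m_2$; the middle equation gives $m_1=h^{-1}\ell_2 h$, and matching this with the definition of composition of relations identifies the stabilizer precisely with $L*\lexp{(h,1)}{M}$. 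The main bookkeeping obstacle is establishing the fiber description of $\psi$ cleanly in the presence of two quotients and the tensor-product relation; once that is in place, the orbit/stabilizer computation is a mechanical unpacking of definitions.
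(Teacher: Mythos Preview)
Your proof is correct. The paper does not actually prove this lemma; it simply cites \cite[Lemma~2.3.24]{Bc1}. Your argument---parametrizing $X\times_H Y$ by $(G\times H\times K)/(L\times M)$ via $\psi$, reading off the $(G\times K)$-orbits as $p_2(L)\backslash H/p_1(M)$, and computing the stabilizer of $(1,h,1)$---is precisely the standard direct computation (and is essentially how Bouc proves it). The fiber description of $\psi$ and the stabilizer identification with $L*\lexp{(h,1)}{M}$ check out once one tracks the paper's biset conventions $(g,h)\cdot x = g\cdot x\cdot h^{-1}$; the few sign/inverse ambiguities in your write-up are harmless since $L$ and $M$ are subgroups.
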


The results in \cite[2.5.5--2.5.8]{Bc1} imply that the map
\begin{equation}\label{eqn iota}
  \iota\colon B(G)\to B^\Delta(G,G)\,,\quad [G/R]\mapsto [(G\times G)/\Delta(R)]\,,
\end{equation}
is an injective ring homomorphism. (Note that, for $R\le G$, one has $\widetilde{G/R}\cong (G\times G)/\Delta(R)$ as $(G\times G)$-sets, using the notation from $\cite[2.5.6]{Bc1}$.)

\begin{lemma}\label{lem diag mark formula}
Let $R\le G$ and let $x\in B(G)$. Then $\Phi_{\Delta(R)}(\iota(x)) = |C_G(R)| \cdot \Phi_R(x)$.
\end{lemma}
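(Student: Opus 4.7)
The plan is to reduce to standard basis elements and then compute both sides directly via the fixed-point formula (\ref{eqn mark}).

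First, both sides of the claimed identity are $\ZZ$-linear in $x$ (the left side because $\iota$ is a homomorphism and $\Phi_{\Delta(R)}$ is additive, the right side because $\Phi_R$ is a homomorphism). It therefore suffices to verify the identity on a standard basis element $x=[G/S]$ with $S\le G$. For such $x$ we have $\iota(x)=[(G\times G)/\Delta(S)]$ by the definition of $\iota$ in (\ref{eqn iota}).

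Next, apply (\ref{eqn mark}) inside $B(G\times G)$ with the subgroup $\Delta(R)$ to obtain
\begin{equation*}
  \Phi_{\Delta(R)}\bigl([(G\times G)/\Delta(S)]\bigr) = \bigl|\{(g_1,g_2)\Delta(S)\in (G\times G)/\Delta(S) \mid \Delta(R)\le \lexp{(g_1,g_2)}{\Delta(S)}\}\bigr|.
\end{equation*}
I would then unpack the containment condition: $\Delta(R)\le\lexp{(g_1,g_2)}{\Delta(S)}$ means that for every $r\in R$ there exists $s\in S$ with $g_1 s g_1^{-1}=r=g_2 s g_2^{-1}$. This is equivalent to the two simultaneous conditions (i) $R\le \lexp{g_1}{S}$ and (ii) $g_2 g_1^{-1}\in C_G(R)$, i.e.\ $g_2\in C_G(R)\, g_1$.

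Finally, I count cosets. The set $T=\{g_1\in G \mid R\le \lexp{g_1}{S}\}$ is stable under right multiplication by $S$, and given $g_1\in T$ there are exactly $|C_G(R)|$ choices of $g_2$ satisfying (ii). The resulting set of pairs is $\Delta(S)$-invariant on the right (the $g_1$-factor by definition of $T$, and then the $g_2$-condition is preserved), so the number of $\Delta(S)$-cosets of such pairs equals
\begin{equation*}
  \frac{|T|\cdot |C_G(R)|}{|S|} = |C_G(R)|\cdot |\{g_1 S\in G/S \mid R\le \lexp{g_1}{S}\}| = |C_G(R)|\cdot \Phi_R([G/S]),
\end{equation*}
where the last equality is (\ref{eqn mark}) applied in $B(G)$. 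There is no real obstacle here; the only care required is the bookkeeping in the coset count, ensuring that the two defining conditions are both invariant under the right $\Delta(S)$-action.
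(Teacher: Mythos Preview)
Your proof is correct and follows essentially the same route as the paper: reduce to $x=[G/S]$, apply the mark formula (\ref{eqn mark}) to $[(G\times G)/\Delta(S)]$, unpack the condition $\Delta(R)\le\lexp{(g_1,g_2)}{\Delta(S)}$ into $R\le\lexp{g_1}{S}$ together with $g_2 g_1^{-1}\in C_G(R)$, and count. The only cosmetic difference is that the paper fixes a transversal $T\times G$ for $(G\times G)/\Delta(S)$ and counts representatives directly, whereas you count all valid pairs and divide by $|\Delta(S)|=|S|$; the arithmetic is the same.
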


\begin{proof}
It suffices to show this for $x=[G/S]$, $S\le G$. It is straightforward to check that if $T\subseteq G$ is a set of representatives for $G/S$ then $T\times G\subseteq G\times G$ is a set of representatives for $(G\times G)/\Delta(S)$. By Equation~(\ref{eqn mark}), the number $\Phi_{\Delta(R)}([(G\times G)/\Delta(S)]) = ((G\times G)/\Delta(S))^{\Delta(R)}$ is equal to the number of elements $(t,g)\in T\times G$ satisfying $\Delta(R)\le \lexp{(t,g)}{\Delta(S)} = \Delta(\lexp{t}{S},c_{tg^{-1}},\lexp{g}{S})$. But $\Delta(R)\le \Delta(\lexp{t}{S},c_{tg^{-1}},\lexp{g}{S})$ if and only if $R\le \lexp{t}{S}$ and $tg^{-1}\in C_G(R)$. This in turn is equivalent to $R\le\lexp{t}{S}$ and $g\in C_G(R)t$. This implies the result, since $\Phi_R([G/S])$ is equal to the number of elements $t\in T$ with $R\le \lexp{t}{S}$.
\end{proof}

Let $G$, $H$, and $K$ be finite groups and let $\rho\colon T\myiso R$ be an isomorphism between a subgroup $T$ of $K$ and a subgroup $R$ of $G$. We denote by $\Gamma_H(R,\rho,T)$ the set of triples $(\sigma,S,\tau)$, where $S\le H$ and $\sigma\colon S\myiso R$ and $\tau\colon T\myiso S$ are isomorphisms such that $\rho=\sigma\circ\tau$. Note that $H$ acts on $\Gamma_H(R,\rho,T)$ by $\lexp{h}{(\sigma, S,\tau)}= (\sigma c_{h^{-1}}, \lexp{h}{S},c_h\tau)$, for $h\in H$ and $(\sigma,S,\tau)\in\Gamma_H(R,\rho,T)$. We denote by $\Gammatilde_H(R,\rho,T)\subseteq \Gamma_H(R,\rho,T)$ a set of representatives of the $H$-orbits of $\Gamma_H(R,\rho,T)$. The following theorem is a slight reformulation of \cite[Theorem~2.5]{BD1}.

\begin{theorem}\label{thm BD}
Let $G$, $H$, and $K$ be finite groups, let $R\le G$ and $T\le K$ be isomorphic subgroups and let $\rho\colon T\myiso R$ be an isomorphism. Then, for any $\gamma\in B^\Delta(G,H)$ and $\delta\in B^\Delta(H,K)$, one has
\begin{equation*}
  \Phi_{\Delta(R,\rho,T)}(\gamma\cdotH\delta) 
  =  \sum_{(\sigma,S,\tau)\in\Gammatilde_H(R,\rho,T)} 
  |C_H(S)|^{-1} \cdot \Phi_{\Delta(R,\sigma,S)}(\gamma) \cdot \Phi_{\Delta(S,\tau,T)}(\delta)\,.
\end{equation*}
\end{theorem}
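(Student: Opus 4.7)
The plan is to reduce to standard basis elements, expand both sides via the Mackey formula~(Lemma~\ref{lem Mackey formula}) and the mark formula~(\ref{eqn mark}), and then match the two as the same count of decorated tuples.

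By $\ZZ$-bilinearity of $-\cdotH-$ and $\ZZ$-linearity of the three marks appearing in the identity, I may assume $\gamma=[(G\times H)/L]$ and $\delta=[(H\times K)/M]$ with $L=\Delta(A,\alpha,B)$ and $M=\Delta(C,\beta,D)$. Applying Lemma~\ref{lem Mackey formula} and Notation~\ref{not beginning}(f) (cutting down to the intersection of sources and targets), I would write
\[
\gamma\cdotH\delta=\sum_{h\in B\backslash H/C}[(G\times K)/N_h],\qquad N_h=\Delta\bigl(\alpha(B\cap\lexp{h}{C}),\ \alpha c_h\beta|_\cdot,\ \beta^{-1}(\lexp{h^{-1}}{B}\cap C)\bigr).
\]
Applying (\ref{eqn mark}), the LHS $\Phi_{\Delta(R,\rho,T)}(\gamma\cdotH\delta)$ becomes the sum over $h\in B\backslash H/C$ of the number of cosets $(g,k)N_h$ for which $R\le\lexp{g}{A}$, $T\le\lexp{k}{D}$, the elements $t\in T$ land in the source of the conjugated twisted diagonal after restriction through $h$, and $\rho=c_g\alpha c_h\beta c_{k^{-1}}|_T$. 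On the RHS, the same formula gives
\[
\Phi_{\Delta(R,\sigma,S)}(\gamma)=\bigl|\{(g,h_1)L:R\le\lexp{g}{A},\ S\le\lexp{h_1}{B},\ \sigma=c_g\alpha c_{h_1^{-1}}|_S\}\bigr|,
\]
and an analogous description for $\Phi_{\Delta(S,\tau,T)}(\delta)$ in variables $(h_2,k)M$. A direct check of the definition of the $H$-action on $\Gamma_H(R,\rho,T)$ shows that the stabilizer of any $(\sigma,S,\tau)$ is exactly $C_H(S)$, so the weighted sum $\sum_{\Gammatilde_H}|C_H(S)|^{-1}(\cdots)$ equals $|H|^{-1}\sum_{\Gamma_H}(\cdots)$.

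The heart of the proof is a bijection between (i) the set of pairs $(h,(g,k)N_h)$ counted by the LHS, and (ii) the set of quintuples $((\sigma,S,\tau),(g,h_1)L,(h_2,k)M)\in\Gamma_H(R,\rho,T)\times(G\times H)/L\times(H\times K)/M$ satisfying the compatibilities above, all of which reduce to: $R\le\lexp{g}{A}$, $T\le\lexp{k}{D}$, $S=\lexp{h_1}{(B\cap\lexp{h_1^{-1}h_2}{C})}$, and $\rho=c_g\alpha c_{h_1^{-1}h_2}\beta c_{k^{-1}}|_T$. The forward map sends such a quintuple to $h:=h_1^{-1}h_2$ (taken in its double coset $BhC$) and $(g,k)N_h$; the inverse map, given $(h,(g,k)N_h)$, chooses any $(h_1,h_2)\in H^2$ with $h_1^{-1}h_2=h$ (a choice contributing the factor $|H|$ that cancels the $|H|^{-1}$ obtained above) and reads off $(S,\sigma,\tau)$ by restriction.

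The main obstacle is the bookkeeping: one must verify that the combined $L\times M$-coset equivalence on $(g,h_1,h_2,k)$ together with the diagonal left $H$-action $(h_1,h_2)\mapsto(hh_1,hh_2)$ (which preserves $h_1^{-1}h_2$ and realises the $H$-action on $\Gamma_H$) collapses to exactly the $N_h$-coset equivalence on $(g,k)$ once $h_1^{-1}h_2$ is fixed in its double coset representative $h$. Checking this matches both the integer counts and the conjugation compatibilities is the only delicate part; once this is done, the formula follows by reading off the cardinalities of the two sets in bijection.
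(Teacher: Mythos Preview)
The paper does not prove Theorem~\ref{thm BD}; it is stated as ``a slight reformulation of \cite[Theorem~2.5]{BD1}'' and cited from there without argument. So there is no in-paper proof to compare against.

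Your direct approach---reduce to transitive bisets, expand $\gamma\cdotH\delta$ via Lemma~\ref{lem Mackey formula}, unfold both sides with the mark formula~(\ref{eqn mark}), and match the resulting counts---is correct in substance and is essentially how such formulas are established (and is presumably close to what \cite{BD1} does). A few points of exposition should be tightened before this counts as a complete proof:
\begin{itemize}
\item What you describe is not a bijection between (i) and (ii) but an $|H|$-to-$1$ surjection from (ii) onto (i); the factor $|H|$ then cancels the $|H|^{-1}$ coming from passing from $\Gammatilde_H$ to $\Gamma_H$. Saying ``bijection'' and then invoking an extra factor $|H|$ is inconsistent.
\item In your list of reduced compatibilities, the relation for $S$ should be the containment $S\le \lexp{h_1}{(B\cap \lexp{h_1^{-1}h_2}{C})}$, not an equality: here $S=\tau(T)=c_{h_2}\beta c_{k^{-1}}(T)$ is determined by the coset data, and the condition coming from $\Phi_{\Delta(R,\sigma,S)}(\gamma)$ is $S\le \lexp{h_1}{B}$, which together with $S\le \lexp{h_2}{C}$ gives only an inclusion.
\item Your forward map $\bigl((\sigma,S,\tau),(g,h_1)L,(h_2,k)M\bigr)\mapsto (BhC,(g,k)N_h)$ with $h=h_1^{-1}h_2$ is not well-defined as written: changing the representatives of $(g,h_1)L$ or $(h_2,k)M$ moves $h$ inside its double coset and simultaneously changes $(g,k)$. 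The clean way to handle this (which your final paragraph gestures at) is to work at the level of tuples $(g,h_1,h_2,k)$, quotient first by the free diagonal left $H$-action $(h_1,h_2)\mapsto(h'h_1,h'h_2)$ to replace $(h_1,h_2)$ by $h=h_1^{-1}h_2$, and then verify that the residual right $L\times M$-action on $(g,h,k)$ has orbits indexed exactly by pairs $(BhC,(g,k)N_h)$. Once this is checked, the identity follows by counting.
\end{itemize}
None of these are genuine gaps in the idea; they are exactly the ``bookkeeping'' you flag, and once carried out the argument is complete.
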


The following lemma is a consequence of Proposition~1.7(a) and (c) in \cite{BD1}.

\begin{lemma}\label{lem normalizer order}
Let $G$ and $H$ be finite groups, let $\Delta(R,\alpha,S)\in\scrS_{G,H}^\Delta$, and set
\begin{equation*}
  N_\alpha:=\{h\in N_H(S) \mid \exists g\in N_G(R)\colon \alpha c_h = c_g \alpha \text{ on $S$}\}
\end{equation*}
and
\begin{equation*}
  N_{\alpha^{-1}}:= \{ g\in N_G(R) \mid \exists h\in N_H(S)\colon \alpha c_h = c_g\alpha \text{ on $S$}\}\,.
\end{equation*}
Then
\begin{equation*}
  |N_{G\times H}(\Delta(R,\alpha,S))| = |N_{\alpha^{-1}}|\cdot |C_H(S)| = |C_G(R)| \cdot |N_\alpha|\,.
\end{equation*}
\end{lemma}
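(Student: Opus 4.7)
The plan is to describe $N_{G \times H}(\Delta(R,\alpha,S))$ explicitly as a subgroup of $G \times H$, and then extract the two factorizations by projecting onto each coordinate.

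First I would unpack the normalizer condition. An element $(g,h) \in G\times H$ satisfies $(g,h)\cdot\Delta(R,\alpha,S)\cdot(g,h)^{-1} = \Delta(R,\alpha,S)$ if and only if, for every $s \in S$, the pair $(g\alpha(s)g^{-1}, hsh^{-1})$ lies in $\Delta(R,\alpha,S)$. This forces $hsh^{-1} \in S$ for all $s \in S$, so $h \in N_H(S)$, and then $g\alpha(s)g^{-1} = \alpha(hsh^{-1})$ for all $s \in S$; the latter in particular forces $gRg^{-1} = R$, i.e., $g \in N_G(R)$, and rewrites as the identity $c_g \circ \alpha = \alpha \circ c_h$ on $S$. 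Thus
\begin{equation*}
  N_{G\times H}(\Delta(R,\alpha,S)) = \{(g,h) \in N_G(R) \times N_H(S) \mid c_g\alpha = \alpha c_h \text{ on } S\}.
\end{equation*}

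Next I would consider the two coordinate projections restricted to this normalizer. The projection $\pi_2\colon N_{G\times H}(\Delta(R,\alpha,S)) \to N_H(S)$, $(g,h) \mapsto h$, is a group homomorphism whose image is, by definition, precisely $N_\alpha$. Its kernel consists of pairs $(g,1)$ with $g \in N_G(R)$ satisfying $c_g \alpha = \alpha$ on $S$; this equation says that $g$ centralizes $\alpha(S) = R$, so the kernel is $C_G(R) \times \{1\}$ (using that $C_G(R) \le N_G(R)$). The first-coordinate projection $\pi_1\colon (g,h) \mapsto g$ is handled symmetrically: its image equals $N_{\alpha^{-1}}$ by definition, and its kernel is $\{1\} \times C_H(S)$, since $h \in N_H(S)$ with $\alpha = \alpha c_h$ on $S$ amounts to $c_h$ acting trivially on $S$, i.e., $h \in C_H(S)$.

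Putting the two short exact sequences together yields
\begin{equation*}
  |N_{G\times H}(\Delta(R,\alpha,S))| = |C_G(R)| \cdot |N_\alpha| = |N_{\alpha^{-1}}| \cdot |C_H(S)|,
\end{equation*}
which is the claimed identity. There is no real obstacle here: once the normalizer is described as a subgroup of $N_G(R) \times N_H(S)$ cut out by the compatibility relation $c_g\alpha = \alpha c_h$, everything reduces to identifying the images and kernels of the two projections, and the only small care needed is in verifying that $C_G(R)$ (respectively $C_H(S)$) is automatically contained in $N_G(R)$ (respectively $N_H(S)$), so that the kernels of the projections are exactly as described.
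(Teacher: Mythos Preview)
Your proof is correct. The paper does not actually give a proof of this lemma; it merely states that it is a consequence of Proposition~1.7(a) and (c) in \cite{BD1}. Your argument is exactly the direct computation one would expect: describe the normalizer as the set of pairs $(g,h)\in N_G(R)\times N_H(S)$ satisfying the compatibility $c_g\alpha=\alpha c_h$ on $S$, then read off the two factorizations via the kernels and images of the coordinate projections. There is nothing missing and no gap; your self-contained argument is preferable in this context to the bare citation.
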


\begin{lemma}\label{lem isomorphisms}
Let $G$, $H$, and $K$ be finite groups and let $\phi\colon H\myiso G$ and $\psi\colon K\myiso H$ be isomorphisms.

\smallskip
{\rm (a)} One has
\begin{equation*}
  [(G\times H)/\Delta(G,\phi,H)] \cdotH [(H\times K)/\Delta(H,\psi,K)] = [(G\times K)/\Delta(G,\phi\psi,K)]\,.
\end{equation*}

\smallskip
{\rm (b)} Let $\delta:=[(G\times H)/\Delta(G,\phi,H)]\in B^\Delta(G,H)$. The map
\begin{equation*}
  \delta\cdotH - \colon B^\Delta(H,H)\to B^\Delta(G,H)\,, \gamma\mapsto\delta\cdotH\gamma\,,
\end{equation*}
is a group isomorphism with inverse $\delta^\circ\cdotG -$.
\end{lemma}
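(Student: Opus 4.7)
The plan is to derive part~(a) from the Mackey formula of Lemma~\ref{lem Mackey formula}, and then to deduce~(b) as a formal consequence of~(a) together with associativity of the tensor product and the identity property recalled in Notation~\ref{not beginning}(e).

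For~(a), I would set $L:=\Delta(G,\phi,H)$ and $M:=\Delta(H,\psi,K)$ and note that $p_2(L)=H=p_1(M)$, so the double coset space $p_2(L)\backslash H/p_1(M)$ reduces to a single class represented by $h=1$. Lemma~\ref{lem Mackey formula} therefore collapses to a single term, giving
\[
[(G\times H)/L]\cdotH[(H\times K)/M]=[(G\times K)/(L*M)].
\]
The explicit description of the composition $*$ of relations recorded at the end of Notation~\ref{not beginning}(f) then yields $L*M=\Delta(G,\phi\psi,K)$, completing~(a).

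For~(b), I would first observe that $\delta^\circ=[(H\times G)/\Delta(H,\phi^{-1},G)]$ by Notation~\ref{not beginning}(d). Applying~(a) twice then gives $\delta\cdotH\delta^\circ=[(G\times G)/\Delta(G)]=[G]$ and $\delta^\circ\cdotG\delta=[(H\times H)/\Delta(H)]=[H]$, the identity elements of $B^\Delta(G,G)$ and $B^\Delta(H,H)$ respectively. For any $\gamma\in B^\Delta(H,H)$, associativity of the tensor product together with the identity property in Notation~\ref{not beginning}(e) give
\[
\delta^\circ\cdotG(\delta\cdotH\gamma)=(\delta^\circ\cdotG\delta)\cdotH\gamma=[H]\cdotH\gamma=\gamma,
\]
and the symmetric computation shows $\delta\cdotH(\delta^\circ\cdotG\alpha)=\alpha$ for every $\alpha\in B^\Delta(G,H)$. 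Since the tensor product is $\ZZ$-bilinear, both maps are group homomorphisms, hence mutually inverse isomorphisms.

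This plan presents no substantive obstacle; the whole lemma reduces to bookkeeping once the Mackey formula and the composition-of-relations identity are in hand. The only point requiring mild care is tracking domains and codomains: $\delta\cdotH-$ carries $B^\Delta(H,H)$ into $B^\Delta(G,H)$, while $\delta^\circ\cdotG-$ carries $B^\Delta(G,H)$ back into $B^\Delta(H,H)$, so the two maps are genuinely composable in both orders.
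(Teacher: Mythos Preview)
Your proposal is correct and follows exactly the route indicated in the paper: Part~(a) via Lemma~\ref{lem Mackey formula} together with the composition identity in Notation~\ref{not beginning}(f), and Part~(b) as a formal consequence of~(a) and the identity property in Notation~\ref{not beginning}(e). You have simply written out the details the paper leaves implicit.
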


\begin{proof}
Part~(a) follows immediately from the formula in Lemma~\ref{lem Mackey formula} and Part~(b) follows from Part~(a) and the last statement in \ref{not beginning}(e).
\end{proof}

Lemma~\ref{lem isomorphisms}(a) immediately implies that the map
\begin{equation}\label{eqn eta}
  \eta\colon \ZZ\Out(G)\to B^\Delta(G,G)\,,\quad \phibar\mapsto[(G\times G)/\Delta(G,\phi,G)]\,,
\end{equation}
is a ring homomorphism. Moreover, Lemma~\ref{lem isomorphisms}(a) and Lemma~\ref{lem Mackey formula} imply that the map
\begin{equation}\label{eqn rho}
  \rho\colon B^\Delta(G,G)\to \ZZ\Out(G)\,,\quad [(G\times G)/\Delta(R,\alpha,S)] \mapsto
       \begin{cases} \alphabar, &\text{if $R=S=G$,}\\ 0, &\text{otherwise,} \end{cases}
\end{equation}
is a ring homomorphism. Clearly one has $\rho\circ\eta = \id_{\ZZ\Out(G)}$.

\bigskip
The first part of the following lemma is well-known and follows immediately from the injectivity of the ring homomorphism $\Phi\colon B(G)\to\prod_{R\le G} \ZZ$. The second part is a result of Yoshida, see \cite[Proposition~6.7(iii)]{Y}.

\begin{lemma}\label{lem units}
Let $G$ be a finite group.

\smallskip
{\rm (a)} The unit group $B(G)^\times$ is a finite elementary abelian $2$-group.

\smallskip
{\rm (b)} If $G$ has odd order then $B(G)^\times=\{\pm 1\}$.
\end{lemma}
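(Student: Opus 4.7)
The plan is to derive both parts from the injectivity of the mark homomorphism $\Phi\colon B(G)\to\prod_{U\le G}\ZZ$ recalled in Notation~\ref{not beginning}(b).

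For part~(a), observe that the units of the codomain $\prod_{U\le G}\ZZ$ are exactly the tuples all of whose entries lie in $\ZZ^\times=\{\pm 1\}$. Hence any $u\in B(G)^\times$ satisfies $\Phi_U(u)\in\{\pm 1\}$ for every $U\le G$. Because $\Phi_U=\Phi_{xUx^{-1}}$ the tuple $\Phi(u)$ is constant on $G$-conjugacy classes, so $\Phi(B(G)^\times)$ embeds into the finite elementary abelian $2$-group $\{\pm 1\}^{\scrStilde_G}$. Injectivity of $\Phi$ then transfers finiteness and the bound on the exponent to $B(G)^\times$ itself.

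For part~(b), the main tool I would introduce is a Dress-type congruence: for every prime $p$, every $U\le G$, every $g\in N_G(U)$ with $g^p\in U$, and every $x\in B(G)$,
\[
   \Phi_{\langle U,g\rangle}(x)\equiv\Phi_U(x)\pmod{p}.
\]
By $\ZZ$-linearity it suffices to verify the congruence for $x=[G/H]$. Using formula~(\ref{eqn mark}), the difference $\Phi_U([G/H])-\Phi_{\langle U,g\rangle}([G/H])$ equals the cardinality of the set of cosets $kH\in G/H$ with $U\le\lexp{k}H$ but $g\notin\lexp{k}H$. Since $g\in N_G(U)$, the cyclic group $\langle g\rangle$ acts on this set; for any coset in it, the stabiliser $\langle g\rangle\cap\lexp{k}H$ contains $g^p$ but not $g$, so every orbit has size exactly~$p$, and the count is divisible by $p$.

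Now assume $|G|$ is odd and let $u\in B(G)^\times$. Part~(a) gives $\Phi_V(u)\in\{\pm 1\}$ for every $V\le G$. For any $U\le G$ I would pick a subnormal chain $\{1\}=U_0\trianglelefteq U_1\trianglelefteq\cdots\trianglelefteq U_n=U$ with each factor $U_{i+1}/U_i$ cyclic of odd prime order $p_i$, and choose $g_i\in U_{i+1}$ lifting a generator. Then $g_i\in N_G(U_i)$ and $g_i^{p_i}\in U_i$, so the congruence gives $\Phi_{U_{i+1}}(u)\equiv\Phi_{U_i}(u)\pmod{p_i}$; as both sides are $\pm 1$ and $p_i$ is odd, equality holds. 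Telescoping yields $\Phi_U(u)=\Phi_{\{1\}}(u)$ for every $U\le G$, so $\Phi(u)=\Phi_{\{1\}}(u)\cdot\Phi(1)$ and injectivity of $\Phi$ forces $u=\pm 1$.

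The main obstacle is the existence of such prime-step subnormal chains for every $U\le G$: this needs $U$ (and hence the whole odd-order group $G$) to be solvable, which is the Feit--Thompson theorem. For the purposes of this paper one simply invokes Yoshida's reference; conceptually, however, the argument reduces to the Dress-style congruence above together with odd-order solvability.
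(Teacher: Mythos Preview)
Your argument for part~(a) is exactly what the paper does: it states the result as well-known and points to the injectivity of the mark homomorphism $\Phi\colon B(G)\to\prod_{R\le G}\ZZ$, which is precisely the mechanism you spell out.

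For part~(b) the paper gives no proof at all; it simply cites Yoshida \cite[Proposition~6.7(iii)]{Y}. Your Dress-type congruence argument is correct. The orbit-size claim deserves one extra word of justification: if the set of cosets $kH$ with $U\le\lexp{k}{H}$ but $g\notin\lexp{k}{H}$ is nonempty, then $g^p\in\lexp{k}{H}$ and $g\notin\lexp{k}{H}$ force $p\mid|g|$, whence $[\langle g\rangle:\langle g^p\rangle]=p$ and the stabiliser, being squeezed between $\langle g^p\rangle$ and a proper subgroup of $\langle g\rangle$, equals $\langle g^p\rangle$. With that, the telescoping along a prime-step subnormal series is clean, and you correctly identify that the existence of such a series is exactly odd-order solvability, i.e., Feit--Thompson. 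So your write-up supplies a genuine proof where the paper only gives a reference; the price is the heavy dependence on Feit--Thompson, which you acknowledge.
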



\section{Proof of Theorem~\ref{thm main}}\label{sec proof of Thm}

The goal of this section is the proof of Theorem~\ref{thm main}. We will need several lemmas.

\begin{lemma}\label{lem 1}
Let $G$ and $H$ be finite groups and assume that $\gamma\in B^\Delta(G,H)$ satisfies $\gamma\cdotH \gamma^\circ=[G]$ in $B^\Delta(G,G)$.

\smallskip
{\rm (a)} For each $R\le G$ there exists a twisted diagonal subgroup $\Delta(R,\alpha,S)\le G\times H$, unique up to $(1\times H)$-conjugacy, such that $\Phi_{\Delta(R,\alpha,S)}(\gamma)\neq 0$. Moreover, for any such subgroup one has 
\begin{equation*}
  |C_G(R)| = |\Phi_{\Delta(R,\alpha,S)}(\gamma)| = |C_H(S)|\quad \text{and} \quad N_{\alpha^{-1}}=N_G(R)\,.
\end{equation*}

\smallskip
{\rm (b)} There exists an isomorphism $\phi\colon H\myiso G$, unique up to composition with inner automorphisms of $G$, such that $\Phi_{\Delta(G,\phi,H)}(\gamma)\neq 0$.
\end{lemma}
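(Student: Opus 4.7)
The plan is to prove part~(a) by applying Theorem~\ref{thm BD} to the identity $\gamma\cdotH\gamma^\circ = [G]$ with $K = G$, $T = R$ and $\rho = \id_R$, and then to derive~(b) by specializing~(a) at the two distinguished subgroups $R = 1$ and $R = G$.

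For~(a), I start from the elementary identity $\Phi_{\Delta(R)}([G]) = |C_G(R)|$, which is immediate from Equation~(\ref{eqn mark}) together with the standard identification $(G\times G)/\Delta(G)\cong G$. Theorem~\ref{thm BD}, combined with $\Phi_{\Delta(S,\sigma^{-1},R)}(\gamma^\circ) = \Phi_{\Delta(R,\sigma,S)}(\gamma)$ from Lemma~\ref{lem mark properties}(a), then yields
\begin{equation*}
  |C_G(R)| = \sum_{(\sigma,S,\sigma^{-1})\in\Gammatilde_H(R,\id_R,R)} |C_H(S)|^{-1}\,\Phi_{\Delta(R,\sigma,S)}(\gamma)^2\,.
\end{equation*}
By Lemma~\ref{lem mark properties}(b) each $\Phi_{\Delta(R,\sigma,S)}(\gamma)$ is divisible by both $|C_G(R)|$ and $|C_H(S)|$, so each summand on the right is a non-negative integer multiple of $|C_G(R)|$. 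Since these multiples sum to $1$, exactly one summand is nonzero, and the resulting equation $\Phi^2 = |C_G(R)|\cdot|C_H(S)|$, together with both divisibilities, forces $|C_G(R)| = |C_H(S)| = |\Phi|$: indeed $|C_G(R)|^2\mid\Phi^2 = |C_G(R)|\cdot|C_H(S)|$ gives $|C_G(R)|\mid|C_H(S)|$, the reverse being symmetric. Since the $H$-action on triples $(\sigma,S,\sigma^{-1})$ matches $(1\times H)$-conjugation on the corresponding subgroups $\Delta(R,\sigma,S)$, uniqueness of the $H$-orbit translates into the required uniqueness up to $(1\times H)$-conjugacy.

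The equality $N_{\alpha^{-1}} = N_G(R)$ reduces to the inclusion $N_G(R)\subseteq N_{\alpha^{-1}}$. Given $g\in N_G(R)$, the conjugate $\lexp{(g,1)}{\Delta(R,\alpha,S)} = \Delta(R,c_g\alpha,S)$ has the same nonzero mark, hence by the uniqueness just established it is $(1\times H)$-conjugate to $\Delta(R,\alpha,S)$; this produces $h\in N_H(S)$ with $c_g\alpha = \alpha c_{h^{-1}}$ on $S$, i.e., $g\in N_{\alpha^{-1}}$.

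For~(b), part~(a) with $R = 1$ forces $S = 1$ in the resulting triple (there is only one isomorphism into the trivial group), and the identity $|C_G(1)| = |C_H(S)|$ collapses to $|G| = |H|$. Applying~(a) with $R = G$ then yields a twisted diagonal $\Delta(G,\phi,S_0)$, unique up to $(1\times H)$-conjugacy, with $\phi\colon S_0\myiso G$ an isomorphism and $S_0\le H$; since $|S_0| = |G| = |H|$, necessarily $S_0 = H$. The $(1\times H)$-ambiguity $(\phi,H)\leftrightarrow(\phi c_{h^{-1}},H)$ rewrites via $\phi c_{h^{-1}} = c_{\phi(h^{-1})}\phi$ and the surjectivity of $\phi$ as uniqueness of $\phi$ up to composition on the left with an inner automorphism of $G$. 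The step I expect to require the most care is the arithmetic extraction $|C_G(R)| = |C_H(S)| = |\Phi|$ in~(a); once that is in place, the rest is formal manipulation of the formulas already established.
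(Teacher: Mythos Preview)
Your proof is correct and follows essentially the same route as the paper's: apply Theorem~\ref{thm BD} to $\Phi_{\Delta(R)}(\gamma\cdotH\gamma^\circ)=|C_G(R)|$, use the divisibility from Lemma~\ref{lem mark properties}(b) to force a single nonzero term with $|\Phi_{\Delta(R,\alpha,S)}(\gamma)|=|C_G(R)|=|C_H(S)|$, deduce $N_{\alpha^{-1}}=N_G(R)$ from the uniqueness, and then specialize to $R=\{1\}$ and $R=G$ for part~(b). The only cosmetic differences are that the paper obtains $\Phi_{\Delta(R)}([G])=|C_G(R)|$ via Lemma~\ref{lem diag mark formula} and phrases the arithmetic extraction as $1=\sum\frac{|\Phi|}{|C_G(R)|}\cdot\frac{|\Phi|}{|C_H(S)|}$ with both factors non-negative integers, while you argue via $|C_G(R)|^2\mid\Phi^2$; both are equivalent.
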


\begin{proof}
(a) By Lemma~\ref{lem diag mark formula}, Theorem~\ref{thm BD}, and Lemma~\ref{lem mark properties}(a), we have
\begin{align*}
  |C_G(R)| & = \Phi_{\Delta(R)}([(G\times G)/\Delta(G)]) = \Phi_{\Delta(R)}(\gamma\cdotH\gamma^\circ) \\
  & = \sum_{(\alpha,S,\alpha^{-1})\in\Gammatilde_H(R,\id_R,R)} |C_H(S)|^{-1}\cdot \Phi_{\Delta(R,\alpha,S)}(\gamma) 
      \cdot \Phi_{\Delta(S,\alpha^{-1},R)}(\gamma^\circ) \\
  & = \sum_{(\alpha,S,\alpha^{-1})\in\Gammatilde_H(R,\id_R,R)} |C_H(S)|^{-1}\cdot 
        \Phi_{\Delta(R,\alpha,S)}(\gamma)^2\,.
\end{align*}
This implies
\begin{equation*}
   1=\sum_{(\alpha,S,\alpha^{-1})\in\Gammatilde_H(R,\id_R,R)} \frac{|\Phi_{\Delta(R,\alpha,S)}(\gamma)|}{|C_G(R)|}
   \cdot\frac{|\Phi_{\Delta(R,\alpha,S)}(\gamma)|}{|C_H(S)|}\,\,.
\end{equation*}
By Lemma~\ref{lem mark properties}(b), for each $(\alpha,S,\alpha^{-1})\in\Gammatilde_H{(R,\id_R,R)}$, the integer $\Phi_{\Delta(R,\alpha,S)}(\gamma)$ is divisible by $|C_G(R)|$ and by $|C_H(S)|$. This implies that there exists a unique element $(\alpha,S,\alpha^{-1})\in\Gammatilde_H(R,\id_R,R)$ such that $\Phi_{\Delta(R,\alpha,S)}(\gamma)\neq 0$. For this element $(\alpha,S,\alpha^{-1})$ the above equation further implies $|C_G(R)|=|\Phi_{\Delta(R,\alpha,S)}(\gamma)|=|C_H(S)|$. If also $\Delta(R,\beta,T)$ is a twisted diagonal subgroup of $G\times H$ such that $\Phi_{\Delta(R,\beta,T)}(\gamma)\neq 0$, then, by the above uniqueness, the triple $(\beta,T,\beta^{-1})\in \Gamma_H(R,\id_R,R)$ is $H$-conjugate to $(\alpha,S,\alpha^{-1})$. This implies that $\Delta(R,\beta,T)$ is $(1\times H)$-conjugate to $\Delta(R,\alpha, S)$.

To see that $N_{\alpha^{-1}}=N_G(R)$, let $g\in N_G(R)$. Then $\Phi_{\Delta(R,c_g\alpha,S)}(\gamma) = \Phi_{\Delta(R, \alpha,S)}(\gamma)\neq 0$ and hence $\Delta(R,c_g\alpha,S)$ is $(1\times H)$-conjugate to $\Delta(R,\alpha, S)$. Thus, there exists $h\in H$ such that $\Delta(R,c_g\alpha c_h^{-1},\lexp{h}{S})=\Delta(R,\alpha,S)$. This implies $g\in N_{\alpha^{-1}}$.

\smallskip
(b) By Part~(a), applied to $R=G$, there exists a twisted diagonal subgroup of $G\times H$, unique up to $(1\times H)$-conjugation, of the form $\Delta(G,\phi,S)$ with $\Phi_{\Delta(G,\phi,S)}(\gamma)\neq 0$. Moreover, Part~(a) applied to $R=\{1\}$ yields $|G|=|C_G(1)|=|C_H(1)| = |H|$. This implies that $S=H$ and the uniqueness part in (a) completes the proof.
\end{proof}

\bigskip
{\bf Proof} {\em of Theorem~\ref{thm main}(a):}\quad By Lemma~\ref{lem 1}(b), there exists an isomorphism $\phi\colon H\myiso G$. By Lemma~\ref{lem isomorphisms}(b), the free abelian groups $B^\Delta(H,H)$ and $B^\Delta(G,H)$ have the same rank. Consider the additive group homomorphism
\begin{equation*}
  \gamma\cdotH - \colon B^\Delta(H,H)\to B^\Delta(G,H)\,.
\end{equation*}
Since $\gamma\cdotH\gamma^\circ=[G]$, this map is (split) surjective, and considering ranks it is also injective. This and the equation
\begin{equation*}
  \gamma\cdotH(\gamma^\circ\cdotG\gamma) = (\gamma\cdotH\gamma^\circ)\cdotG\gamma = [G]\cdotG\gamma 
  = \gamma = \gamma\cdotH [H]
\end{equation*}
now imply $\gamma^\circ\cdotG\gamma=[H]$.
\qed

\bigskip
{\bf Proof} {\em of Theorem~\ref{thm main}(b):}\quad Let $\gamma\in B_\circ^\Delta(G,H)$. Lemma~\ref{lem 1}(a) implies that $\Phi_{\Delta(R,\alpha,S)}(\gamma)\in\{\pm|C_G(R)|,0\}$ for all twisted diagonal subgroups $\Delta(R,\alpha,S)$ of $G\times H$. Moreover, if $L\le G\times H$ is not twisted diagonal then $\Phi_L(\gamma)=0$ by Equation~(\ref{eqn mark}). Thus, $\Phi(\gamma)$ belongs to a finite subset of $\prod_{L\in\scrS_{G\times H}} \ZZ$. Since $\Phi$ is injective, $\gamma$ belongs to a finite subset of $B^\Delta(G,H)$.
\qed

\bigskip
{\bf Proof} {\em of Theorem~\ref{thm main}(c):}\quad If $G$ and $H$ are isomorphic and if $\phi\colon H\myiso G$ is an isomorphism then $[(G\times H)/\Delta(G,\phi,H)]$ belongs to $B^\Delta_\circ(G,H)$, by Lemma~\ref{lem isomorphisms}(a).

Conversely, if $\gamma\in B^\Delta_\circ(G,H)$ then Lemma~\ref{lem 1}(b) implies that $G\cong H$.
\qed

\bigskip
{\bf Proof} {\em of the first part of Theorem~\ref{thm main}(d):} \quad By Lemma~\ref{lem 1}(b), there exists an isomorphism $\phi\colon H\myiso G$ such that $\Phi_{\Delta(G,\phi,H)}(\gamma)\neq 0$. Since $\Delta(G,\phi,H)$ is maximal among all twisted diagonal subgroups, the standard basis element $[(G\times H)/\Delta(G,\phi,H)]$ occurs in $\gamma$ with non-zero coefficient, see Lemma~\ref{lem mark properties}(d). Assume that also $[(G\times H)/\Delta(G,\psi,H)]$ occurs in $\gamma$ with non-zero coefficient for some isomorphism $\psi\colon H\to G$. Again, since $\Delta(G,\psi,H)$ is a maximal twisted diagonal subgroup, this implies $\Phi_{\Delta(G,\psi,H)}(\gamma)\neq 0$. Now Lemma~\ref{lem 1}(b) implies the uniqueness statement in Theorem~\ref{thm main}(d).
\qed

\bigskip
The next lemma collects some results about the relationship between the groups $B_\circ^\Delta(G,G), B(G)^\times,$ and $\Out(G).$ Except for the normality statement of Part~(a), they are likely well-known to experts. We include proofs for all statements for the reader's convenience. Note that Parts~(a) and (b) produce elements in $B_\circ^\Delta(G,G)$ in a natural way.

\begin{lemma}\label{lem 2}
Let $G$ be a finite group.

\smallskip
{\rm (a)} One has $\iota(B(G)^\times)=\iota(B(G))\cap B_\circ^\Delta(G,G)$, and $\iota(B(G)^\times)$ is a normal subgroup of $B_\circ^\Delta(G,G)$. Here, $\iota\colon B(G)\to B^\Delta(G,G)$ denotes the ring homomorphism from (\ref{eqn iota}).

\smallskip
{\rm (b)} The ring homomorphism $\eta\colon \ZZ\Out(G)\to B^\Delta(G,G)$ from (\ref{eqn eta}) restricts to an injective group homomorphism
\begin{equation*}
  \eta\colon \Out(G)\to B_\circ^\Delta(G,G)\,.
\end{equation*}

\smallskip
{\rm (c)} For each $\gamma\in B_\circ^\Delta(G,G)$ there exists a unique $\phibar\in\Out(G)$ and a unique $\epsilon\in\{\pm 1\}$ such that $\rho(\gamma)=\epsilon\cdot\phibar$ (see (\ref{eqn rho}) for the definition of $\rho$). Moreover, the resulting map
\begin{equation*}
  \pi\colon B_\circ^\Delta(G,G)\to\Out(G)\,,\quad \gamma\mapsto \phibar\,,
\end{equation*}
is a surjective group homomorphism with $\pi\circ\eta=\id_{\Out(G)}$.

\smallskip
{\rm (d)} Set $\Lambda_G:=\ker(\pi)\trianglelefteq B_\circ^\Delta(G,G)$ and $\Delta_G:=\im(\eta)\le B_\circ^\Delta(G,G)$. Then $\iota(B(G)^\times)\le \Lambda_G$ and $B_\circ^\Delta(G,G) = \Lambda_G\rtimes\Delta_G$ is a semidirect product.

\smallskip
{\rm (e)} For $\phibar\in\Out(G)$ and $u\in B(G)^\times$ one has $\eta(\phibar)\cdotG \iota(u) \cdotG \eta(\phibar)^{-1} = \iota(\lexp{\phibar}{u})$.
\end{lemma}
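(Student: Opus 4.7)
The plan is to reduce the identity to standard basis elements of $B(G)$ and then compute the triple tensor product directly via Lemma~\ref{lem Mackey formula}. Since $\iota$, $\eta$, and the $\Out(G)$-action on $B(G)$ are all $\ZZ$-linear, and since conjugation by $\eta(\phibar)$ is a $\ZZ$-linear operation on $B^\Delta(G,G)$, it is enough to verify
$$\eta(\phibar)\cdotG\iota([G/R])\cdotG\eta(\phibar)^{-1}=\iota([G/\phi(R)])$$
for each $R\le G$ and each chosen representative $\phi\in\Aut(G)$ of $\phibar$. A preliminary step is to observe that $\eta(\phibar)^{-1}=\eta(\phibar^{-1})=[(G\times G)/\Delta(G,\phi^{-1},G)]$, which follows at once from Lemma~\ref{lem isomorphisms}(a) applied to the pair $(\phi,\phi^{-1})$.

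Next, I would apply Lemma~\ref{lem Mackey formula} twice. For the first factor, with $L=\Delta(G,\phi,G)$ and $M=\Delta(R)$, one has $p_2(L)=G$, so the double coset set $p_2(L)\backslash G/p_1(M)$ is a singleton and the product collapses to $[(G\times G)/(L*M)]$; unwinding the composition of relations from Notation~\ref{not beginning}(f) shows $L*M=\Delta(\phi(R),\phi|_R,R)$. A second application of the same lemma, now with this new subgroup on the left and $M'=\Delta(G,\phi^{-1},G)$ on the right, again yields a singleton double coset (since $p_1(M')=G$), and the relational composition produces $\Delta(\phi(R))$. Hence the triple product equals $[(G\times G)/\Delta(\phi(R))]=\iota([G/\phi(R)])=\iota(\lexp{\phibar}{[G/R]})$, which is the claim.

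I do not anticipate a real obstacle: the argument is two applications of the Mackey formula, each made trivial by a ``full'' twisted diagonal absorbing one side of the double coset space. The only point requiring some care is transcribing the composition of relations correctly in each of the two steps, which is a short direct check from the definition in Notation~\ref{not beginning}(f); alternatively, in the step computing $L*M$ one can view it as the composition of $\Delta(G,\phi,G)$ restricted to the subpair $(\phi(R),R)$ with $\Delta(R,\id_R,R)$, whereupon the boxed formula at the end of Notation~\ref{not beginning}(f) applies verbatim.
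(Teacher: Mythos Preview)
Your proposal addresses only part~(e) of the lemma; parts (a)--(d) are not treated. For part~(e) itself, your argument is correct and is exactly the kind of direct verification the paper intends when it writes that ``(d) and (e) are straightforward verifications, left to the reader.'' The two applications of Lemma~\ref{lem Mackey formula} collapse to single terms for the reasons you give (one of the two projections is all of $G$ in each case), and your computations of the compositions $\Delta(G,\phi,G)*\Delta(R)=\Delta(\phi(R),\phi|_R,R)$ and $\Delta(\phi(R),\phi|_R,R)*\Delta(G,\phi^{-1},G)=\Delta(\phi(R))$ are correct. So for part~(e) your approach coincides with the paper's (implicit) one.

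If you intended to prove the full lemma, note that the paper gives detailed arguments for (a)--(c): part~(a) uses Lemma~\ref{lem units}(a) together with a nontrivial mark computation via Theorem~\ref{thm BD} and Lemma~\ref{lem 1}(a) to establish normality; part~(c) uses Lemma~\ref{lem 1}(b) and Lemma~\ref{lem normalizer order} to pin down the coefficient $\epsilon$. Part~(d) is, like~(e), a routine check once (a)--(c) are in hand.
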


\begin{proof}
(a) Recall from (\ref{eqn iota}) that $\iota\colon B(G)^\times\to B^\Delta(G,G)^\times$ is an injective group homomorphism. We first show that $\iota(B(G)^\times)\subseteq B_\circ^\Delta(G,G)$. If $u\in B(G)^\times$ then $u^2=1$ (see Lemma~\ref{lem units}(a)) and $\iota(u)^\circ=\iota(u)$ (see~\ref{not beginning}(d)). Thus, $\iota(u)^\circ=\iota(u)=\iota(u)^{-1}$ and $\iota(u)\in B_\circ^\Delta(G,G)$. Conversely, assume that $u\in B(G)$ and $\iota(u)\in B_\circ^\Delta(G,G)$. Again, $\iota(u)^\circ=\iota(u)$ implies that $\iota(u)^2=1$. Since $\iota$ is an injective ring homomorphism, this yields $u^2=1$ in $B(G)$ and $u\in B(G)^\times$. This completes the proof of the first equation.

Next we show that $\iota(B(G)^\times)$ is normal in $B_\circ^\Delta(G,G)$. Let $u\in B(G)^\times$ and $\gamma\in B_\circ^\Delta(G,G)$. By the first equation it suffices to show that $\gamma\cdotG\iota(u)\cdotG\gamma^\circ\in \iota(B(G))$. Note that $\iota(B(G))$ is equal to the $\ZZ$-span of the standard basis elements $[(G\times G)/\Delta(R)]$ with $R\le G$. Thus, by Lemma~\ref{lem mark properties}(d), it suffices to show that $\Phi_{\Delta(R,\alpha,S)}(\gamma\cdotG\iota(u)\cdotG\gamma^\circ) =0$ for all $\Delta(R,\alpha,S)\in\scrS_{G,G}^\Delta$ which are not conjugate to the diagonal subgroup $\Delta(R)$, $R\le G$. By the uniqueness part of Lemma~\ref{lem 1}(a), it suffices to show that $\Phi_{\Delta(R)}(\gamma\cdotG \iota(u)\cdotG\gamma^\circ) \neq 0$ for all $R\le G$. By Theorem~\ref{thm BD} we have
\begin{equation*}
  \Phi_{\Delta(R)}(\gamma\cdotG\iota(u)\cdotG\gamma^\circ) = 
  \sum_{(\alpha, S, \alpha^{-1})\in\Gammatilde_G(R,\id_R,R)}
  |C_G(S)|^{-1}\cdot\Phi_{\Delta(R,\alpha,S)}(\gamma)\cdot
  \Phi_{\Delta(S,\alpha^{-1},R)}(\iota(u)\cdotG\gamma^\circ)\,.
\end{equation*}
Moreover, by Lemma~\ref{lem 1}(a), there exists a unique element $(\alpha,S,\alpha^{-1})\in\Gammatilde_G(R,\id_R,R)$ such that $\Phi_{\Delta(R,\alpha,S)}(\gamma)\neq 0$. Thus, it suffices to show that for this element $(\alpha,S,\alpha^{-1})$ one has $\Phi_{\Delta(S,\alpha^{-1},R)}(\iota(u)\cdotG\gamma^{\circ})\neq 0$. But, again by Theorem~\ref{thm BD}, one has
\begin{equation*}
  \Phi_{\Delta(S,\alpha^{-1},R)}(\iota(u)\cdotG\gamma^\circ) = 
  \sum_{(\sigma,T,\tau)\in\Gammatilde_G(S,\alpha^{-1},R)} 
  |C_G(T)|^{-1}\cdot \Phi_{\Delta(S,\sigma,T)}(\iota(u))\cdot
  \Phi_{\Delta(T,\tau,R)}(\gamma^\circ)\,.
\end{equation*}
Without loss of generality, we may assume that $(\id_S,S,\alpha^{-1})\in\Gammatilde_G(S,\alpha^{-1},R)$. Since $\Phi_{\Delta(S,\alpha^{-1},R)}(\gamma^\circ)=\Phi_{\Delta(R,\alpha,S)}(\gamma)\neq 0$ and since $\Phi_{\Delta(T,\tau,R)}(\gamma^\circ)=\Phi_{\Delta(R,\tau^{-1},T)}(\gamma)= 0$ for all $(\sigma,T,\tau)\in \Gammatilde_G(S,\alpha^{-1},R)$ different from $(\id_S,S,\alpha^{-1})$ (by Lemma~\ref{lem 1}(a)), it suffices to show that $\Phi_{\Delta(S)}(\iota(u))\neq 0$. But $\Phi_{\Delta(S)}(\iota(u))=|C_G(S)|\cdot\Phi_S(u)\in \{\pm|C_G(S)|\}$ by Lemma~\ref{lem diag mark formula} and since $\Phi_S(u)\in\{\pm1\}$. This completes the proof of (a).

\smallskip
(b) This is immediate from Lemma~\ref{lem isomorphisms}(a), since $\Delta(G,\phi,G)^\circ=\Delta(G,\phi^{-1},G)$.

\smallskip
(c) By Lemma~\ref{lem 1}(b), there exists a unique element $\phibar\in\Out(G)$ such that $\Phi_{\Delta(G,\phi,G)}(\gamma)\neq 0$. So, by Lemma~\ref{lem mark properties}(d), we have $\rho(\gamma)=\epsilon\cdot[(G\times G)/\Delta(G,\phi,G)]$ for some $\epsilon \in\ZZ$. By Lemma~\ref{lem 1}(a) and Lemma~\ref{lem normalizer order}, we obtain
\begin{equation*}
  |Z(G)| = 
  |\Phi_{\Delta(G,\phi,G)}(\gamma)| = |\epsilon| \cdot [N_{G\times G}(\Delta(G,\phi,G))\colon \Delta(G,\phi,G)] =
  |\epsilon| \cdot |Z(G)|.
\end{equation*}
This implies $\epsilon\in\{\pm 1\}$ and the result follows.

\smallskip
(d) and (e) are straightforward verifications, left to the reader.
\end{proof}

\begin{lemma}\label{lem 3}
Let $G$, $H$, and $K$ be finite groups, let $\psi\colon K\myiso H$ be an isomorphism, set $\delta:=[(H\times K)/\Delta(H,\psi,K)]\in B^\Delta(H,K)$, and let $\gamma\in B^\Delta(G,H)$. Then, $\gamma$ is uniform if and only if $\gamma\cdotH\delta$ is uniform.
\end{lemma}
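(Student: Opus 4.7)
The plan is to reduce to a direct Mackey-formula computation on standard basis elements, then obtain the converse implication from the invertibility of $-\cdotH\delta$.

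First I would fix an isomorphism $\phi\colon H\myiso G$ and a subgroup $V\le H$ and compute $[(G\times H)/\Delta(\phi(V),\phi,V)]\cdotH\delta$ using Lemma~\ref{lem Mackey formula}. Setting $L=\Delta(\phi(V),\phi,V)$ and $M=\Delta(H,\psi,K)$, one has $p_2(L)=V$ and $p_1(M)=H$, so the double coset set $V\backslash H/H$ is a singleton and the tensor product collapses to a single transitive biset. A short unfolding of the relation composition $L*M$ (per Notation~\ref{not beginning}(f)) shows that, with $W:=\psi^{-1}(V)\le K$,
\begin{equation*}
  [(G\times H)/\Delta(\phi(V),\phi,V)]\cdotH\delta = [(G\times K)/\Delta(\phi\psi(W),\phi\psi|_W,W)]\,.
\end{equation*}

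Assuming $\gamma$ is uniform with respect to $\phi$, I would write $\gamma$ as a $\ZZ$-linear combination of standard basis elements of the form $[(G\times H)/\Delta(\phi(V),\phi,V)]$, $V\le H$. Linearity, the displayed identity, and the bijection $V\mapsto\psi^{-1}(V)$ between $H$-conjugacy classes of subgroups of $H$ and $K$-conjugacy classes of subgroups of $K$ then immediately place $\gamma\cdotH\delta$ in the $\ZZ$-span of the basis elements $[(G\times K)/\Delta(\phi\psi(W),\phi\psi|_W,W)]$, $W\le K$, witnessing uniformity of $\gamma\cdotH\delta$ with respect to $\phi\psi\colon K\myiso G$.

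For the converse, I would set $\delta^\circ:=[(K\times H)/\Delta(K,\psi^{-1},H)]\in B^\Delta(K,H)$. Lemma~\ref{lem isomorphisms}(a) applied to $\psi$ and $\psi^{-1}$ yields $\delta\cdotK\delta^\circ = [H]$ in $B^\Delta(H,H)$, and hence $\gamma = (\gamma\cdotH\delta)\cdotK\delta^\circ$. Since $\delta^\circ$ has exactly the same shape as $\delta$, the forward direction applied to $\gamma\cdotH\delta$ with $\psi^{-1}$ in place of $\psi$ shows that if $\gamma\cdotH\delta$ is uniform (with respect to some $\phi'\colon K\myiso G$), then $\gamma$ is uniform (with respect to $\phi'\psi^{-1}\colon H\myiso G$).

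No serious obstacle is anticipated; the only step requiring care is the relation-composition computation $\Delta(\phi(V),\phi,V)*\Delta(H,\psi,K) = \Delta(\phi\psi(W),\phi\psi|_W,W)$ and the observation that $\psi$ induces a bijection between conjugacy classes of subgroups on the two sides, both of which are routine once the definitions are unfolded.
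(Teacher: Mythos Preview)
Your proposal is correct and follows essentially the same approach as the paper: the forward direction is a direct Mackey-formula computation showing that uniformity with respect to $\phi$ passes to uniformity with respect to $\phi\psi$, and the converse is obtained by applying the forward direction with $\delta^\circ$ in place of $\delta$. The paper's proof is terser (it simply cites Lemma~\ref{lem Mackey formula} and says ``using $\delta^\circ$, the same argument yields the converse''), but your unpacking of the relation composition $\Delta(\phi(V),\phi,V)*\Delta(H,\psi,K)=\Delta(\phi\psi(W),\phi\psi|_W,W)$ and of the inversion step is accurate and fills in exactly the details the paper omits.
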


\begin{proof}
If $\gamma$ is uniform with respect to the isomorphism $\phi\colon H\myiso G$ then Lemma~\ref{lem Mackey formula} shows that $\gamma\cdotH\delta$ is uniform with respect to the isomorphism $\phi\circ\psi\colon K\myiso G$. Using $\delta^\circ$, the same argument yields the converse.
\end{proof}

\begin{lemma}\label{lem 4}
For a finite group $G$ the following are equivalent:

\smallskip
{\rm (i)} Each $\gamma\in B_\circ^\Delta(G,G)$ is uniform.

\smallskip
{\rm (ii)} Each $\gamma\in \Lambda_G$ is uniform.

\smallskip
{\rm (iii)} $\iota(B(G)^\times) = \Lambda_G$.
\end{lemma}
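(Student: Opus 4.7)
The plan is to establish the cycle (i) $\Rightarrow$ (ii) $\Rightarrow$ (iii) $\Rightarrow$ (i). The first implication is trivial since $\Lambda_G \subseteq B_\circ^\Delta(G,G)$.

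For (ii) $\Rightarrow$ (iii): Lemma~\ref{lem 2}(d) already gives $\iota(B(G)^\times) \le \Lambda_G$, so only the reverse inclusion $\Lambda_G \subseteq \iota(B(G)^\times)$ needs to be shown. Given $\gamma \in \Lambda_G$, hypothesis (ii) provides an isomorphism $\phi \colon G \myiso G$ and integers $n_V$ with $\gamma = \sum_{V \le G} n_V[(G\times G)/\Delta(\phi(V),\phi,V)]$. The key step is to show $\phi \in \Inn(G)$. Lemma~\ref{lem 1}(b) produces an isomorphism $\phi' \colon G \myiso G$, unique modulo $\Inn(G)$, with $\Phi_{\Delta(G,\phi',G)}(\gamma) \neq 0$; by Lemma~\ref{lem mark properties}(d), the basis element $[(G\times G)/\Delta(G,\phi',G)]$ actually occurs in $\gamma$, and since the only maximum-rank basis element available in the uniform decomposition is the $V = G$ term $[(G\times G)/\Delta(G,\phi,G)]$, we must have $\phibar = \overline{\phi'}$ in $\Out(G)$. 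By Lemma~\ref{lem 2}(c), $\pi(\gamma) = \overline{\phi'}$, so $\gamma \in \ker\pi$ forces $\phibar = \overline{\id_G}$, i.e.\ $\phi = c_g$ for some $g \in G$. Since each $\Delta(\lexp{g}{V}, c_g, V)$ is $(g,1)$-conjugate to $\Delta(V)$, the expansion of $\gamma$ collapses into $\iota(B(G))$, and Lemma~\ref{lem 2}(a) yields $\gamma \in \iota(B(G)) \cap B_\circ^\Delta(G,G) = \iota(B(G)^\times)$.

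For (iii) $\Rightarrow$ (i): Given $\gamma \in B_\circ^\Delta(G,G)$, use the semidirect product decomposition of Lemma~\ref{lem 2}(d) to write $\gamma = \lambda \cdotG \eta(\phibar)$ with $\lambda \in \Lambda_G = \iota(B(G)^\times)$ and $\phibar \in \Out(G)$. Then $\lambda$ is a $\ZZ$-linear combination of the elements $[(G\times G)/\Delta(V)]$, $V \le G$, which is by definition uniform with respect to $\id_G$. Lemma~\ref{lem 3}, applied with $\delta = \eta(\phibar) = [(G\times G)/\Delta(G,\phi,G)]$, then immediately yields that $\gamma$ is uniform.

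We expect (ii) $\Rightarrow$ (iii) to be the main obstacle, since it requires simultaneously exploiting the rigidity of Lemma~\ref{lem 1}(b), the maximality criterion of Lemma~\ref{lem mark properties}(d), and the definition of $\pi$ in order to force the uniformizing automorphism of a kernel element to be inner; the other two implications are essentially formal consequences of Lemma~\ref{lem 2}(d) and Lemma~\ref{lem 3}.
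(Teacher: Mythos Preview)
Your proof is correct and follows essentially the same route as the paper's: the cycle (i) $\Rightarrow$ (ii) $\Rightarrow$ (iii) $\Rightarrow$ (i), with (iii) $\Rightarrow$ (i) handled via the semidirect product decomposition of Lemma~\ref{lem 2}(d) together with Lemma~\ref{lem 3}, exactly as the paper does. The only difference is in (ii) $\Rightarrow$ (iii): the paper simply reduces via Lemma~\ref{lem 2}(a) to the inclusion $\Lambda_G\subseteq\iota(B(G))$ and declares this ``an immediate consequence of (ii)'', whereas you carefully spell out why the uniformizing automorphism of an element of $\Lambda_G$ must be inner (using Lemma~\ref{lem 1}(b), Lemma~\ref{lem mark properties}(d), and the definition of $\pi$). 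Your explicit argument is precisely the justification the paper's one-line remark is implicitly invoking, so the two proofs are in substance the same.
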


\begin{proof}
Clearly, (i) implies (ii). Next we show that (ii) implies (iii). By Lemma~\ref{lem 2}(d), it suffices to show that $\Lambda_G\le \iota(B(G)^\times)$. By Lemma~\ref{lem 2}(a), it suffices to show that $\Lambda_G\subseteq \iota(B(G))$, the $\ZZ$-span of the elements $[(G\times G)/\Delta(R)]$, $R\le G$. But this is an immediate consequence of (ii). Finally, we show that (iii) implies (i). Let $\gamma\in B_\circ^\Delta(G,G)$. By Lemma~\ref{lem 2}(d), we can write $\gamma=\gamma'\cdotG\delta$ for some $\gamma'\in\Lambda_G$ and $\delta\in\Delta_G$. Since each element in $\iota(B(G)^\times)$ is uniform, (iii) implies that $\gamma'$ is uniform. Now Lemma~\ref{lem 3} implies that also $\gamma$ is uniform.
\end{proof}

\begin{lemma}\label{lem 5}
Let $G$ be a finite group, let $\gamma\in \Lambda_G$ and assume that $\gamma$ is not uniform. Then there exists a subgroup $\Delta(R,\alpha,S)\in\scrS_{G,G}^\Delta$ which is not $(G\times G)$-conjugate to a diagonal subgroup $\Delta(U)$, $U\le G$, and which satisfies $\Phi_{\Delta(R,\alpha,S)}(\gamma)\neq 0$. If $\Delta(R,\alpha,S)$ is maximal among these subgroups then the  coefficient of $[(G\times G)/\Delta(R,\alpha,S)]$ in $\gamma$ is equal to $\pm1$, $N_G(R)=R<G$ and $N_G(S)=S<G$.
\end{lemma}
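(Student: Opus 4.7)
The proof has two phases: produce a non-diagonal basis element in $\gamma$ from non-uniformity, then analyze the mark $\Phi_L(\gamma)$ at a maximal such $L$ via the mark formulas. For the existence phase, suppose every basis element occurring in $\gamma$ were conjugate to some diagonal $\Delta(U)$; then $\gamma \in \iota(B(G))$, and combined with $\gamma \in B_\circ^\Delta(G,G)$, Lemma~\ref{lem 2}(a) would force $\gamma \in \iota(B(G)^\times)$, which is uniform with respect to $\id_G$---contradicting the hypothesis. So some $[(G \times G)/\Delta(R, \alpha, S)]$ occurring with non-zero coefficient has $\Delta(R, \alpha, S)$ not conjugate to any diagonal, and by Lemma~\ref{lem mark properties}(d) equivalently satisfies $\Phi_{\Delta(R, \alpha, S)}(\gamma) \neq 0$. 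Fix such a $\Delta(R, \alpha, S)$ that is maximal among all subgroups with these properties.

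I next rule out $R = G$: otherwise $S = G$ and $\alpha \in \Aut(G)$, and Lemma~\ref{lem 1}(b) combined with $\pi(\gamma) = 1_{\Out(G)}$ (via Lemma~\ref{lem 2}(c), using $\gamma \in \Lambda_G$) forces $\alpha \in \Inn(G)$, making $\Delta(G, \alpha, G)$ conjugate to $\Delta(G)$---contradiction. So $R < G$ and $S < G$. Next, a direct computation shows that if $\Delta(R, \alpha, S) \le (g, h) \Delta(U) (g, h)^{-1} = \Delta(gUg^{-1}, c_{gh^{-1}}, hUh^{-1})$ for some $U \le G$ and $(g, h) \in G \times G$, then $\alpha = c_{gh^{-1}}|_S$, and hence $\Delta(R, \alpha, S) = (gh^{-1}, 1) \Delta(S) (gh^{-1}, 1)^{-1}$ is conjugate to a diagonal---another contradiction. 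So $L := \Delta(R, \alpha, S)$ is not sub-conjugate to any diagonal. This promotes $L$ to a maximal twisted diagonal with $\Phi \neq 0$ among \emph{all} twisted diagonals (any larger diagonal would contain a conjugate of $L$, ruled out; any larger non-diagonal would contradict our choice).

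By Lemma~\ref{lem mark properties}(d), $L$ is then also maximal in the coefficient support, so only the $L' = L$ summand survives in the expansion
\begin{equation*}
\Phi_L(\gamma) = \sum_{L' \in \scrStilde_{G,G}^\Delta} c_{L'} \Phi_L([(G \times G)/L']),
\end{equation*}
yielding $\Phi_L(\gamma) = c_L \cdot |N_{G \times G}(L)/L|$. Lemma~\ref{lem 1}(a) gives $|\Phi_L(\gamma)| = |C_G(R)| = |C_G(S)|$ and $N_{\alpha^{-1}} = N_G(R)$; applying Lemma~\ref{lem 1}(a) also to $\gamma^\circ \in B_\circ^\Delta(G, G)$ (which lies there by Theorem~\ref{thm main}(a)) and the dual subgroup $\Delta(S, \alpha^{-1}, R)$ yields $N_\alpha = N_G(S)$. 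Lemma~\ref{lem normalizer order} then gives $|N_{G \times G}(L)| = |N_G(R)| \cdot |C_G(S)|$, and combining this with the displayed formula produces $|R| = |c_L| \cdot |N_G(R)|$. Since $c_L \neq 0$ and $|N_G(R)| \ge |R|$, we conclude $|c_L| = 1$ and $N_G(R) = R$; symmetrically, $N_G(S) = S$. The main obstacle is the geometric observation that a non-diagonal twisted diagonal is not even sub-conjugate to any diagonal---this is what lets the mark expansion collapse to a single term and unlocks the integer constraint $|R| = |c_L| \cdot |N_G(R)|$.
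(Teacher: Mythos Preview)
Your proof is correct and follows essentially the same route as the paper's: both compute $\Phi_L(\gamma)=c_L\cdot[N_{G\times G}(L):L]$ at a maximal offending $L$, then combine Lemma~\ref{lem 1}(a) and Lemma~\ref{lem normalizer order} to force $|R|=|c_L|\cdot|N_G(R)|$. The one point you make explicit---that a twisted diagonal not conjugate to a diagonal cannot even be \emph{subconjugate} to one, so maximality among the non-diagonal offenders upgrades to maximality among all twisted diagonals---is exactly what the paper uses tacitly when it asserts that ``the maximality also implies'' the single-term mark formula; your spelling this out is a genuine clarification rather than a different approach.
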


\begin{proof}
Clearly, a twisted diagonal subgroup $\Delta(R,\alpha,S)$ as in the statement exists, by Lemma~\ref{lem mark properties}(d). Assume that $\Delta(R,\alpha,S)$ is maximal as in the statement of the lemma, and let $a\in\ZZ$ denote the coefficient of the standard basis element $[(G\times G)/\Delta(R,\alpha,S)]$ in $\gamma$. Note that $R<G$, since $\gamma\in\Lambda_G$ and $\Delta(R,\alpha,S)$ is not $(G\times G)$-conjugate to a diagonal subgroup of $G\times G$. By Lemma~\ref{lem mark properties}(d), the maximality property of $\Delta(R,\alpha,S)$ implies that $a\neq 0$. The maximality also implies
\begin{equation}\label{eqn one}
  \Phi_{\Delta(R,\alpha,S)}(\gamma)= a\cdot[N_{G\times G}(\Delta(R,\alpha,S))\colon \Delta(R,\alpha,S)] \neq 0\,.
\end{equation}
Further, Lemma~\ref{lem 1}(a) implies that
\begin{equation}\label{eqn two}
  |C_G(R)|=|\Phi_{\Delta(R,\alpha,S)}(\gamma)|=|C_G(S)|\quad\text{and}\quad N_{\alpha^{-1}}=N_G(R)\,.
\end{equation}
On the other hand, by Lemma~\ref{lem normalizer order}, we have
\begin{equation}\label{eqn three}
  |N_{G\times G}(\Delta(R,\alpha,S))| = |C_G(R)|\cdot|N_\alpha| = |C_G(S)|\cdot|N_{\alpha^{-1}}|\,.
\end{equation}
Since also $|R|=|\Delta(R,\alpha,S)|=|S|$, we obtain
\begin{equation*}
  |\Phi_{\Delta(R,\alpha,S)}(\gamma)|\cdot |R|\ 
  {\buildrel(\ref{eqn one})\over=} \ |a|\cdot|N_{G\times G}(\Delta(R,\alpha,S))| \
  {\buildrel(\ref{eqn three})\over =} \ |a|\cdot|C_G(S)|\cdot|N_{\alpha^{-1}}| \
  {\buildrel(\ref{eqn two})\over = } \ |\Phi_{\Delta(R,\alpha,S)}(\gamma)|\cdot|a|\cdot|N_G(R)|\,.
\end{equation*}
This implies $N_G(R)=R$ and $a\in\{\pm 1\}$. Since also $\gamma^\circ\in\Lambda_G$ is not uniform, we obtain by symmetry that $N_G(S)=S<G$.
\end{proof}

\bigskip
{\bf Proof} {\em of Theorem~\ref{thm main}(e):}\quad Assume that $G$ is nilpotent. By Lemma~\ref{lem 2}(d) it suffices to show that $\Lambda_G=\iota(B(G)^\times)$. Assume by contradiction that this does not hold. Then, by Lemma~\ref{lem 4}, there exists $\gamma\in\Lambda_G=\ker(\pi)$ which is not uniform. By Lemma~\ref{lem 5} there exists a proper subgroup $R<G$ with $N_G(R)=R$, in contradiction to $G$ being nilpotent, and the proof is complete. \qed

\bigskip
{\bf Proof} {\em of the last statement in Theorem~\ref{thm main}(d):} Assume that $G$ is nilpotent. Since $B_\circ^\Delta(G,H)$ is not empty, there exists an isomorphism $\psi\colon G\myiso H$, by Theorem~\ref{thm main}(c). Now the remaining statement of Theorem~\ref{thm main}(d) follows from Lemma~\ref{lem 3} and the uniformity of elements in $B_\circ^\Delta(G,G)$, which was proved in the previous paragraph. \qed

\bigskip
This completes the proof of the theorem.


\section{Examples}\label{sec examples}

The following proposition shows that in general, if $G$ is not nilpotent, the subgroup $\iota(B(G)^\times)$ can be properly contained in $\Lambda_G$. In other words, by Lemma~\ref{lem 4}, there can exist orthogonal units in $B^\Delta(G,G)$ that are not uniform.

\begin{proposition}\label{prop frobenius}
Let $G$ be a Frobenius group with Frobenius complement $H$. For each $\alpha\in\Aut(H)$, set
\begin{equation*}
  \gamma_\alpha:=[(G\times G)/\Delta(G)]-[(G\times G)/\Delta(H)]+[(G\times G)/\Delta(H,\alpha,H)]\,.
\end{equation*}
Then $\alpha\mapsto \gamma_\alpha$ induces an injective group homomorphism $j\colon \Out(H)\to \Lambda_G$ with $j(\Out(H))\cap \iota(B(G)^\times)=\{1\}$.
\end{proposition}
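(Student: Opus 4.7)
The plan is to prove the multiplicative identity
\[
  \gamma_\alpha \cdotG \gamma_\beta = \gamma_{\alpha\beta}
  \qquad(\alpha,\beta\in\Aut(H))
\]
and then extract everything else. Granting this, $\gamma_\alpha^\circ = \gamma_{\alpha^{-1}}$ (immediate from $\Delta(R,\alpha,S)^\circ = \Delta(S,\alpha^{-1},R)$), so $\gamma_\alpha\cdotG\gamma_\alpha^\circ = \gamma_{\id_H}=[G]$ and hence $\gamma_\alpha\in B_\circ^\Delta(G,G)$; applying the ring homomorphism $\rho$ of~(\ref{eqn rho}) annihilates both $[(G\times G)/\Delta(H)]$ and $[(G\times G)/\Delta(H,\alpha,H)]$ (since $H<G$), leaving $\rho(\gamma_\alpha)=\overline{\id_G}$, so $\pi(\gamma_\alpha)=1$ and $\gamma_\alpha\in\Lambda_G$.

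For the key identity I would apply the Mackey formula (Lemma~\ref{lem Mackey formula}) with $L=\Delta(H,\alpha,H)$ and $M=\Delta(H,\beta,H)$; here $p_2(L)=p_1(M)=H$, so the sum is indexed by $H\backslash G/H$. The defining Frobenius property $H\cap gHg^{-1}=1$ for $g\in G\setminus H$ splits these double cosets into the trivial one plus $n:=(|K|-1)/|H|$ further ones of common size $|H|^2$ (where $K$ is the Frobenius kernel). Unwinding $L*\lexp{(g,1)}{M}$ from \ref{not beginning}(f), the trivial double coset contributes $\Delta(H,\alpha\beta,H)$, while each non-trivial $g$ forces the middle coordinate to be $1$ and produces the trivial subgroup $\{(1,1)\}$. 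Writing $X_\alpha:=[(G\times G)/\Delta(H,\alpha,H)]$ and $I:=[(G\times G)/\{(1,1)\}]$, this yields
\[
  X_\alpha \cdotG X_\beta = X_{\alpha\beta} + n\cdot I.
\]
Expanding $([G]-X_{\id}+X_\alpha)\cdotG([G]-X_{\id}+X_\beta)$ and using that $[G]$ is the identity of $B^\Delta(G,G)$, the four ``$X\cdotG X$''-products each contribute a copy of $nI$ with signs $+,-,-,+$ which cancel, while the surviving $X$-terms telescope to $[G]-X_{\id}+X_{\alpha\beta}=\gamma_{\alpha\beta}$.

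For the descent to $\Out(H)$, injectivity, and the intersection claim, the essential input is the Frobenius property $N_G(H)=H$: any $(G\times G)$-conjugate of $\Delta(H,\alpha,H)$ of the form $\Delta(H,\alpha',H)$ must come from $g_1,g_2\in H$, giving $\alpha'=c_{g_1}\alpha c_{g_2^{-1}}$, so $X_\alpha=X_{\alpha'}$ iff $\bar\alpha=\bar{\alpha'}$ in $\Out(H)$, and $j$ is well-defined. The same analysis shows that $\Delta(H,\alpha,H)$ is $(G\times G)$-conjugate to a diagonal $\Delta(U)$ only when $\alpha\in\Inn(H)$; hence for $\bar\alpha\ne 1$ the basis element $X_\alpha$ appears in $\gamma_\alpha$ with coefficient $1$ but is absent from every element of $\iota(B(G))$. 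This simultaneously yields injectivity of $j$ and $j(\Out(H))\cap \iota(B(G)^\times)=\{1\}$.

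The main obstacle will be the careful Mackey bookkeeping in the middle paragraph: identifying the clean split of $H\backslash G/H$ via the Frobenius property and tracking that the $nI$-contributions from the four quadratic products cancel with the correct signs. Everything else reduces mechanically to Lemmas~\ref{lem 1} and~\ref{lem 2} together with $N_G(H)=H$.
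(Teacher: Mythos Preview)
Your proof is correct and follows essentially the same line as the paper's: both hinge on the Mackey computation $X_\alpha\cdotG X_\beta = X_{\alpha\beta}+n\cdot I$ (your $n$ equals the paper's $k:=|H\backslash G/H|-1$), then expand $\gamma_\alpha\cdotG\gamma_\beta$ and observe the $nI$-terms cancel. You supply more detail than the paper on the cancellation and on why $N_G(H)=H$ yields well-definedness, injectivity, and the trivial intersection with $\iota(B(G)^\times)$; the closing reference to Lemmas~\ref{lem 1} and~\ref{lem 2} is unnecessary, since your direct verification of $\gamma_\alpha\cdotG\gamma_\beta=\gamma_{\alpha\beta}$ together with $\gamma_\alpha^\circ=\gamma_{\alpha^{-1}}$ already settles everything.
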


\begin{proof}
Clearly $\gamma_{\id_H}=1_{B(G,G)}$ and $\gamma_\alpha^\circ=\gamma_{\alpha^{-1}}$, for all $\alpha\in\Aut(H)$. Let $\alpha,\beta\in\Aut(H)$ and $k:=|H\backslash G/H|-1$. The formula in Lemma~\ref{lem Mackey formula} yields
\begin{equation*}
  [(G\times G)/\Delta(H,\alpha,H)] \cdotG [(G\times G)/\Delta(H,\beta,H)] = 
  [(G\times G)/\Delta(H,\alpha\beta,H)] + k\cdot [(G\times G)/\Delta(E)]\,,
\end{equation*}
where $E$ denotes the trivial subgroup of $G$. With this, it is straightforward to show that $\gamma_\alpha\cdotG\gamma_\beta=\gamma_{\alpha\beta}$, using again Lemma~\ref{lem Mackey formula}. Thus, $\gamma_\alpha$ belongs to $\Lambda_G$ and $\alpha\mapsto\gamma_\alpha$ induces a group homomorphism $j\colon \Out(H)\to\Lambda_G$, $\alphabar\mapsto \gamma_\alpha$. Since $N_G(H)=H$, $j$ is injective.
\end{proof}

\begin{remark}\label{rem frobenius} 
Assume the notation in Proposition~\ref{prop frobenius}.

\smallskip
(a) If $\Out(H)$ is non-trivial, as for instance for $G=A_4$ and $H=A_3$, then $\gamma_\alpha$ is an orthogonal unit in $B^\Delta_\circ(G,G)$ which is not uniform, whenever $\alpha\notin\Inn(H)$.

\smallskip
(b) Proposition~\ref{prop frobenius} implies hat $\iota(B(G)^\times)\rtimes j(\Out(H))$ is a subgroup of $\Lambda_G$. One might ask if one has equality. The following proposition shows that the answer is positive for $G=A_4$. We do not know the answer for general Frobenius groups $G$. We are confident that the answer is positive whenever the Frobenius kernel $N$ is cyclic. But even in this simple case our computations are too lengthy to be included here.
\end{remark}

\begin{proposition}
Let $G:=A_4$ be the alternating group of degree $4$. Then  $B(G)^\times$, $\Lambda_G$, and $B_\circ^\Delta(G,G)$ are elementary abelian $2$-groups of rank $2$, $3$, and $4$, respectively. 
\end{proposition}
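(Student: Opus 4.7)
My plan is to compute the three group orders in turn and then verify the elementary abelian structure.

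For $B(A_4)^\times$, I would apply the mark homomorphism. The group $A_4$ has five conjugacy classes of subgroups $1,C_2,C_3,V_4,A_4$. By Lemma~\ref{lem units}(a), each unit corresponds to a sign pattern $(\epsilon_R)\in\{\pm 1\}^5$ lying in the image of $\Phi\colon B(A_4)\to\ZZ^5$. Inverting the upper-triangular mark matrix over $\QQ$ and imposing integrality of the coefficient vector $\Phi^{-1}((\epsilon_R))$ yields exactly four admissible sign patterns, so $|B(A_4)^\times|=4$, elementary abelian of rank $2$.

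The main step is $|\Lambda_{A_4}|=8$. Since $A_4$ is a Frobenius group with complement $H=C_3$, Proposition~\ref{prop frobenius} supplies an injective homomorphism $j\colon\Out(H)\to\Lambda_{A_4}$ with $j(\Out(H))\cap\iota(B(A_4)^\times)=\{1\}$; together with the normality statement of Lemma~\ref{lem 2}(a), this gives $|\iota(B(A_4)^\times)\cdot j(\Out(H))|=8$. For the reverse containment, let $\gamma\in\Lambda_{A_4}$. Up to $(A_4\times A_4)$-conjugacy, the non-diagonal twisted diagonal subgroups of $A_4\times A_4$ are $\Delta(C_3,\mathrm{inv},C_3)$, $\Delta(V_4,\tau,V_4)$ with $\tau\in\Aut(V_4)\cong S_3$ a transposition, and $\Delta(A_4,\phi,A_4)$ with $\phi$ outer; the coefficient of the last in $\gamma$ vanishes by Lemma~\ref{lem 2}(c). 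Because the only proper self-normalizing subgroup of $A_4$ is $C_3$, Lemma~\ref{lem 5} forces any non-uniform maximum of $\gamma$ to be $\Delta(C_3,\mathrm{inv},C_3)$. The coefficient of $\Delta(V_4,\tau,V_4)$ in $\gamma$ must vanish as well: its only strict twisted-diagonal container (up to conjugacy) is $\Delta(A_4,\phi,A_4)$ with $\phi$ outer, which is absent, so by Lemma~\ref{lem mark properties}(d) the mark $\Phi_{\Delta(V_4,\tau,V_4)}(\gamma)$ equals the coefficient times $[N_{G\times G}(\Delta(V_4,\tau,V_4))\colon\Delta(V_4,\tau,V_4)]=12$, while Lemma~\ref{lem 1}(a) restricts this mark to $\{0,\pm 4\}$. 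The same argument confines the coefficient of $\Delta(C_3,\mathrm{inv},C_3)$ in $\gamma$ to $\{0,\pm 1\}$. If it is zero, $\gamma$ is uniform and lies in $\iota(B(A_4)^\times)$ by Lemma~\ref{lem 4}. If it is non-zero, a short direct computation gives $\Phi_{\Delta(C_3)}(j(\mathrm{inv}))=0$ and $\Phi_{\Delta(C_3,\mathrm{inv},C_3)}(j(\mathrm{inv}))=3$, and applying Theorem~\ref{thm BD} with the two $G$-orbit representatives in $\Gammatilde_G(C_3,\rho,C_3)$ for $\rho\in\{\mathrm{id},\mathrm{inv}\}$ shows that right-multiplication by $j(\mathrm{inv})$ swaps the marks of $\gamma$ at $\Delta(C_3)$ and $\Delta(C_3,\mathrm{inv},C_3)$; since Lemma~\ref{lem 1}(a) prevents both from being simultaneously non-zero, $\gamma\cdotG j(\mathrm{inv})$ has zero mark at $\Delta(C_3,\mathrm{inv},C_3)$, is therefore uniform, and $\gamma\in\iota(B(A_4)^\times)\cdot j(\Out(H))$.

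Finally, Lemma~\ref{lem 2}(d) gives $B^\Delta_\circ(A_4,A_4)=\Lambda_{A_4}\rtimes\Delta_{A_4}$ with $|\Delta_{A_4}|=|\Out(A_4)|=2$, hence order $16$. For the elementary abelian rank-$4$ conclusion, I would check that every generator $\iota(u)$, $j(\mathrm{inv})$, $\eta(\phibar)$ squares to the identity and that the three generator types pairwise commute. Commutativity of $\iota(u)$ with $\eta(\phibar)$ follows from Lemma~\ref{lem 2}(e) and the observation that each conjugacy class of subgroups of $A_4$ is determined by its order, hence fixed by every element of $\Out(A_4)$, so the action on $B(A_4)^\times$ is trivial; the remaining commutation relations $\eta(\phibar)\cdotG j(\mathrm{inv})=j(\mathrm{inv})\cdotG\eta(\phibar)$ and $\iota(u)\cdotG j(\mathrm{inv})=j(\mathrm{inv})\cdotG\iota(u)$ reduce to short applications of Lemma~\ref{lem Mackey formula}. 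The principal obstacle is the mark-swapping computation in the second step, which requires a careful (if elementary) enumeration of the orbits in $\Gammatilde_G(C_3,\rho,C_3)$ and evaluation of the marks of $j(\mathrm{inv})$.
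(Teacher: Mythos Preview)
Your argument is correct and arrives at the same conclusion as the paper, but by a genuinely different route.

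The paper proceeds by direct coefficient determination: after observing (via Lemma~\ref{lem Mackey formula}) that $\eta(\phibar)$ centralizes all of $B^\Delta(G,G)$, so that $B_\circ^\Delta(G,G)=\Lambda_G\times\Delta_G$, it takes an arbitrary $\gamma\in\Lambda_G\smallsetminus\iota(B(G)^\times)$ with leading coefficient $1$, writes $\gamma$ as an integer combination of the eight standard basis elements with unknown coefficients $a,b,c,d,e$, and then solves for these one at a time using the mark equations coming from Lemma~\ref{lem 1}(a) at $\Delta(V)$, $\Delta(H)$, $\Delta(H,\alpha,H)$, $\Delta(K)$, and $\Delta(E)$. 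This pins down exactly two candidates, which are then checked by hand to lie in $B_\circ^\Delta(G,G)$ and have order $2$.

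Your approach is more structural: you use Proposition~\ref{prop frobenius} to produce the subgroup $j(\Out(C_3))\le\Lambda_G$ from the outset, and then, instead of solving for coefficients, you show that right-multiplication by $j(\mathrm{inv})$ swaps the marks at $\Delta(C_3)$ and $\Delta(C_3,\mathrm{inv},C_3)$ (via Theorem~\ref{thm BD}), thereby reducing any non-uniform $\gamma$ to a uniform one. This gives $\Lambda_G=\iota(B(G)^\times)\cdot j(\Out(C_3))$ without ever writing down the explicit non-uniform units. Your argument that the coefficient of $[(G\times G)/\Delta(V_4,\tau,V_4)]$ vanishes is also slightly different in flavour: rather than invoking Lemma~\ref{lem 5} directly for $V_4$, you compare the normalizer index $12$ against the bound $|C_G(V_4)|=4$ from Lemma~\ref{lem 1}(a).

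What each buys: the paper's method yields the two non-uniform orthogonal units explicitly, which is useful if one wants concrete generators. Your method is cleaner conceptually and points the way toward the question raised in Remark~\ref{rem frobenius}(b), since the mark-swapping argument is exactly the mechanism one would hope to generalize to arbitrary Frobenius groups. The commutation checks you defer at the end (particularly $\iota(u)\cdotG j(\mathrm{inv})=j(\mathrm{inv})\cdotG\iota(u)$) are indeed routine Mackey computations, at the same level of detail the paper leaves to the reader.
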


\begin{proof}
We denote by $V$ the subgroup of order $4$ of $G$, set $H:=\langle(1,2,3)\rangle$, $K:=\langle (1,2)(3,4)\rangle$, and $E:=\{1\}$. Then the elements $[G/G]$, $[G/V]$, $[G/H]$, $[G/K]$, $[G/E]$ form the standard basis of $B(G)$. Also, the elements $\Delta(G)$, $\Delta(G,\phi,G)$, $\Delta(V)$, $\Delta(V,\tau,V)$, $\Delta(H)$, $\Delta(H,\alpha,H)$, $\Delta(K)$, $\Delta(E)$ form a set of representatives of the conjugacy classes of $\scrS_{G,G}^\Delta$. Here, $\phi\in\Aut(G)$, $\tau\in\Aut(V)$, and $\alpha\in\Aut(H)$ are given by conjugation with $(1,2)$.

\smallskip
Using the formula in Lemma~\ref{lem Mackey formula} one can check that the element $[(G\times G)/\Delta(G,\phi,G)]$ centralizes the above basis elements and therefore the ring $B^\Delta(G,G)$. This implies that the semidirect product $\Lambda_G\rtimes \Delta_G$ in Lemma~\ref{lem 3}(d) is a direct product. As $B(G)^\times$ is an elementary abelian $2$-group, it suffices to show that $\Lambda_G\smallsetminus \iota(B(G)^\times)$ has precisely $4$ elements and that these elements have order $2$.

\smallskip
Next assume that $\gamma\in\Lambda_G\smallsetminus \iota(B(G)^\times)$ and that its coefficient at $[(G\times G)/\Delta(G)]$ is equal to $1$. We will show that there are precisely two elements $\gamma$ with this property and that they have order $2$. This implies the desired result, since multiplication with $-1$ yields the remaining elements in $\Lambda_G\smallsetminus\iota(B(G)^\times)$. Since $V$ is a maximal subgroup of $G$ and since $N_G(V)=G$, Lemma~\ref{lem 5} implies that the only standard basis element of the form $[(G\times G)/\Delta(V,\beta,V)]$, $\beta\in\Aut(V)$, that can occur in $\gamma$ is the element $[(G\times G)/\Delta(V)]$. On the other hand, since $\gamma$ is not in $\iota(B(G)^\times$, the basis element $[(G\times G)/\Delta(H,\alpha,H)]$ must occur in $\gamma$. This means that $\gamma$ lies in the span of $[(G\times G)/\Delta(G)]$, $[(G\times G)/\Delta(V)]$, $[(G\times G)/\Delta(H)]$, $[(G\times G)/\Delta(H,\alpha,H)]$, $[(G\times G)/\Delta(K)]$ and $[(G\times G)/\Delta(E)]$. Lemma~\ref{lem 5} implies that the coefficient of $[(G\times G)/\Delta(H,\alpha,H)]$ is equal to $1$ or to $-1$. Thus, we have
\begin{align*}
  \gamma= & [(G\times G)/\Delta(G)]+a[(G\times G)/\Delta(V)]+b[(G\times G)/\Delta(H)]+c[(G\times G)/\Delta(H,\alpha,H)]\\
              & +d[(G\times G)/\Delta(K)]+e[(G\times G)/\Delta(E)]
\end{align*}
with $a,b,c,d,e\in\ZZ$ and $c\in\{\pm 1\}$. By Lemma~\ref{lem 1}(a) and Lemma~\ref{lem diag mark formula}  we have 
\begin{align*}
  |C_G(V)| & = |\Phi_{\Delta(V)}(\gamma)| = |\Phi_{\Delta(V)}([(G\times G)/\Delta(G)]+a[(G\times G)/\Delta(V)])| \\
                & = |C_G(V)|\cdot|\Phi_V([G/G]+a[G/V])| = |C_G(V)|\cdot(1+3a)\,.
\end{align*}
Thus, $1=1+3a$ and $a=0$. Since
\begin{equation*}
  \Phi_{\Delta(H,\alpha,H)}(\gamma)= \Phi_{\Delta(H,\alpha,H)}(c[(G\times G)/\Delta(H,\alpha,H)])
      = c\cdot[N_{G\times G}(\Delta(H,\alpha,H)):H]\neq 0\,,
\end{equation*}
The uniqueness part in Lemma~\ref{lem 1}(a) and Lemma~\ref{lem diag mark formula} imply 
\begin{align*}
  0 & =\Phi_{\Delta(H)}(\gamma) = \Phi_{\Delta(H)}([(G\times G)/\Delta(G)]+b[(G\times G)/\Delta(H)]) \\
       & = |C_G(H)|\cdot\Phi_H([G/G]+b[G/H]) = 3(1+b)\,. 
\end{align*}
Thus, $b=-1$. Similarly, we obtain
\begin{equation*}
  |C_G(K)| = |\Phi_{\Delta(K)}(\gamma)| = |C_G(K)|\cdot|\Phi_K([G/G]+d\cdot[G/K])| = |C_G(K)|\cdot |1+2d|\,.
\end{equation*}
Thus, $|1+2d|=1$ and $d\in\{-1, 0\}$. Finally, we have
\begin{equation}\label{eqn last}
  12=|\Phi_{\Delta(E)}(\gamma)| = |12 -48 +48c+72d+144e|\,.
\end{equation}
The two combinations $(c,d)=(1,-1)$ and $(c,d)=(-1,0)$ and Equation~(\ref{eqn last}) imply that $e$ is not an integer, a contradiction. The combinations $(c,d)=(1,0)$ and $(c,d)=(-1,-1)$ lead to $e=0$ and $e=1$, respectively. Thus, there are at most 2 possibilities for $\gamma$ with the properties assumed at the beginning of the section. It is straightforward to check that both possibilities actually lead to elements in $B^\Delta_\circ(G,G)$ of order $2$. Now the result follows. For completeness and in order to have explicit generators of $B_\circ^\Delta(G,G)$, we remark that the elements 
\begin{equation*}
  -[G/G]\quad\text{and}\quad [G/G]-2[G/H]-[G/K]+[G/E]
\end{equation*}
are units of $B(G)$ and therefore generate $B(G)^\times$.
\end{proof}


\end{document}